\documentclass{amsart}

\usepackage{amsopn, amsthm, amsgen, amscd,amsmath, amssymb}
\usepackage{amsaddr}
\usepackage{graphicx}
\usepackage{subfigure}
\usepackage[nodots]{numcompress}
\usepackage{verbatim}
\usepackage{fancybox}
\usepackage{subfigure}
\usepackage[usenames,dvipsnames]{color}
\usepackage{sidecap}
\usepackage{caption}
\usepackage{ifthen}
\usepackage{textcomp}
\usepackage[all]{xy}

\newcommand{\Z}{{\mathbb Z}}

\newcommand{\R}{{\mathbb R}}
\newcommand{\C}{{\mathbb C}}
\newcommand{\Q}{{\mathbb Q}}

\theoremstyle{plain}
\newtheorem{teo}{Theorem}[section]
\newtheorem{lema}[teo]{Lemma}
\newtheorem{cor}[teo]{Corollary}
\newtheorem{prop}[teo]{Proposition}

\theoremstyle{definition}
\newtheorem{defi}{Definition}[section]

\theoremstyle{remark}
\newtheorem{remark}{Remark}[section]

\title[]{ON THE KHOVANOV HOMOLOGY OF SURGERIES}

\author{J.M.Burgos}
\address{Center for Research and Advanced Studies of the National Polytechnic Institute,\\Mathematics Department, CINVESTAV, Mexico City, Mexico, 07360.}
\email{burgos@math.cinvestav.mx}


\begin{document}
\vspace{2cm}

\maketitle

\begin{abstract}
We show a spectral sequence for the rational Khovanov homology of an oriented link in terms of the rational Khovanov complexes and homologies of the link surgeries along an admissible cut. As a non trivial corollary, we give an explicit splitting formula for the Jones polynomial.
\end{abstract}



\section{Introduction}

The ``divide and conquer'' \footnote{``divide et impera'', Julius Caesar.} metaprinciple has been one of the most effective strategies ever played and mathematics is not an exception. However, to be able to apply this principle it is neccessary to:
\begin{enumerate}
\item \textit{Decompose} the problem into subproblems.
\item \textit{Solve} the subproblems.
\item \textit{Assemble} the partial solutions into a a solution of the main original problem.
\end{enumerate}
Some problems are local in nature and they have the Decompose/Assemble parts as god given. Some other's not. For these non local problems the Decompose/Assemble parts live in a symbiotic relationship \footnote{Assuming that every subproblem is solvable, the design of either of the Decompose/Assemble parts must contemplate the limitations of the other.} and usually their design is an art more than a science. It is important to remark that the design of these parts need theory and cannot be solved by brute force. For example, we could have a supercomputer computing a thousand of subproblems simultaneously but if we have neither a formula nor an algorithm to assemble these partial solutions into a global one, it is pointless \footnote{In spite of the example, this paper is not about computing nor parallel processing neither computation complexity.}. The Decompose/Assemble parts are real mathematical problems and usually non trivial ones.

The Khovanov homology \cite{Khovanov_original} is a great example of a non local problem. It is the categorification of the Jones polynomial and it proves to be a finer invariant for non alternating knots \footnote{For example, it distinguishes $9_{42}$ from its mirror image whereas the Jones polynomial doesn't. In \cite{classicBN}, section 4.5, there is a complete list of pairs of prime knots up to 11 crossings whose Jones polynomials are equal but their rational Khovanov homology are not.}. As a non local problem, a great effort has been made in the design of its Decompose/Assemble parts.

Following this direction, among other things the remarkable paper \cite{Tangles_BN} extends Khovanov homology theory for tangles and defines the TQFT \textit{tautological functor} with the remarkable property that it is a planar algebra morphism. However, the local theory obtained is not the usual Khovanov theory and to specialize the local theory to the usual one, other TQFT functor is needed. Unfortunatley, this other functor loose information in such a way that its local behavior is lost in the sense that it is no longer a planar algebra morphism and there is no apparent way to get neither a splitting formula nor a spectral sequence for the usual Khovanov homology\footnote{The same happen with Lee's homology or the characteristic two theory.}. Nevertheless, within local Khovanov theory, the tools of \textit{delooping} and \textit{Gaussian elimination} developed in \cite{Fast_Khovanov_BN} enables dramatically powerful Khovanov homology calculations.

Another example is the paper \cite{open_closed_TQFT} where an open-closed TQFT is used in order to get a theory for tangles. Relating this theory to the Khovanov homology faces the same problem as above. However, they give a spectral sequence converging to the Khovanov homology in positive characteristic.


We say that a Jordan curve $C$ is an admissible cut of an oriented link $L$ if it is transversal to $L$ and walking along the curve in some direction the orientation of the $2n$ intersection points alternate. The cut $C$ separates the link diagram $L$ in two tangles $T_{1}$ and $T_{2}$ and all of the possible non crossing closures of these are the surgeries $L_{1}^{\mathcal{A}}$ and $L_{2}^{\mathcal{B}}$ respectively of the link diagram $L$, indexed by non crossing partitions $\mathcal{A}$ and $\mathcal{B}$. The following are the main results of the paper:

\begin{teo}
Given an admissible cut $C$ of an oriented link diagram $L$, there is a double chain complex whose rows and columns are linear combinations of the Khovanov complexes of the surgeries $L_{1}^{\mathcal{A}}$ and $L_{2}^{\mathcal{B}}$ respectively such that its total complex is homotopic to the Khovanov complex of $L$.
\end{teo}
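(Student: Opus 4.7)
The plan is to reorganise the cube of complete resolutions of $L$ along the cut $C$. Let $c_{1}$ and $c_{2}$ be the sets of crossings of $L$ lying inside the tangles $T_{1}$ and $T_{2}$ respectively, so that the Khovanov cube of $L$ is indexed by $\{0,1\}^{c_{1}}\times\{0,1\}^{c_{2}}$. First I would view $\mathrm{Kh}(L)$ as the total complex of the double complex whose $(\alpha,\beta)$-vertex is the Khovanov module of the diagram obtained by performing the $\alpha$-resolution of $T_{1}$ and the $\beta$-resolution of $T_{2}$, with horizontal differential (resp.\ vertical differential) given by the saddle map along an edge of the $T_{1}$-subcube (resp.\ $T_{2}$-subcube). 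Because the two saddle cobordisms are supported in disjoint regions of the plane they commute, and after the standard sign twist of the Khovanov cube they anticommute, so this is a genuine bicomplex whose total complex is, edge by edge, the Khovanov cube of $L$.

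The second step is to identify each row and column with a linear combination of Khovanov complexes of the surgeries. A complete resolution $\alpha$ of $T_{1}$ is a planar tangle which splits uniquely as a non-crossing matching $\mathcal{B}(\alpha)$ of the $2n$ boundary points on $C$ together with $k_{1}(\alpha)$ free circles inside the disc bounded by $C$, and replacing $T_{1}$ by this planar tangle inside $L$ yields the diagram $L_{2}^{\mathcal{B}(\alpha)}\sqcup(k_{1}(\alpha)\ \text{free circles})$. Its Khovanov complex, computed by resolving the crossings of $T_{2}$, is canonically $\mathrm{Kh}(L_{2}^{\mathcal{B}(\alpha)})\otimes V^{\otimes k_{1}(\alpha)}$ with $V$ the Frobenius algebra of an unknot, and this is precisely the column of the double complex at position $\alpha$. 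Grouping columns by the value of $\mathcal{B}(\alpha)$ presents the $\alpha$-blocks as $\bigoplus_{\mathcal{B}}\mathrm{Kh}(L_{2}^{\mathcal{B}})\otimes W_{\mathcal{B}}$ with $W_{\mathcal{B}}=\bigoplus_{\alpha:\,\mathcal{B}(\alpha)=\mathcal{B}}V^{\otimes k_{1}(\alpha)}$ a graded coefficient space. The symmetric analysis of the $T_{2}$-resolutions realises each row as a complex of the form $\mathrm{Kh}(L_{1}^{\mathcal{A}(\beta)})\otimes V^{\otimes k_{2}(\beta)}$, so after grouping the row blocks become graded linear combinations of the $\mathrm{Kh}(L_{1}^{\mathcal{A}})$.

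It remains to check compatibility of the inter-block differentials with this identification. Along an edge of the $T_{1}$-cube from $\alpha$ to $\alpha'$ the saddle is supported entirely inside the $C$-disc, so it acts trivially on $T_{2}$ and descends to a map $\mathrm{Kh}(L_{2}^{\mathcal{B}(\alpha)})\otimes V^{\otimes k_{1}(\alpha)}\to\mathrm{Kh}(L_{2}^{\mathcal{B}(\alpha')})\otimes V^{\otimes k_{1}(\alpha')}$ given by a Frobenius merge/split on the free circles when $\mathcal{B}(\alpha)=\mathcal{B}(\alpha')$ and by a genuine saddle between two surgeries otherwise; the $\beta$-direction is handled in the same way. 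The total complex is then on the nose $\mathrm{Kh}(L)$, so the homotopy statement is automatic, and delooping $V\cong q\cdot\mathbf{1}\oplus q^{-1}\cdot\mathbf{1}$ turns each $W_{\mathcal{B}}$ and its row counterpart into a bona fide graded multiplicity, presenting rows and columns as linear combinations of Khovanov complexes of the surgeries. The main obstacle I foresee is the bookkeeping of the cohomological and quantum shifts together with the signs: one must check that the canonical sign assignment on the cube of $L$ restricts to compatible sign assignments on the two subcubes, and that the writhe-dependent shifts of $\mathrm{Kh}(L)$ split additively along $C$ once each surgery $L_{i}^{\mathcal{X}}$ has been oriented compatibly with the alternation condition built into the admissibility of the cut.
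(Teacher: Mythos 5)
Your proposal follows essentially the same route as the paper: reorganise the Khovanov cube of $L$ into a bicomplex along the two sub-cubes of crossings, identify each column with $\bigoplus_{\mathcal{B}}\mathcal{C}(L_{2}^{\mathcal{B}})\otimes W_{\mathcal{B}}$ (your $W_{\mathcal{B}}$ is the paper's multiplicity space $\mathcal{C}(L_{1})_{\mathcal{B}}$, built from the inner circles of the $T_{1}$-resolutions) and each row symmetrically, and observe that the total complex is the Khovanov complex of $L$. The sign and ordering bookkeeping you defer at the end is exactly what the paper's ``compatible order respect to the cut'' convention and the identity $sg(\alpha)=r(\alpha_{1})+sg(\alpha_{2})$ are introduced to handle, so no new idea is missing.
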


\begin{teo}
In characteristic zero, given an admissible cut $C$ of an oriented link diagram $L$ there is spectral sequence calculated from the Khovanov homology of the surgeries $L_{2}^{\mathcal{B}}$ and the Khovanov complexes of the surgeries $L_{1}^{\mathcal{A}}$, converging to the Khovanov homology of $L$.
\end{teo}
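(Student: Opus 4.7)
The plan is to feed the double complex produced by the preceding theorem into the standard machinery of spectral sequences of a bounded double complex, and then to identify its first page in terms of the Khovanov invariants of the two families of surgeries.

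Concretely, I will invoke the preceding theorem to obtain a bounded double complex $C^{\bullet,\bullet}$ whose rows are linear combinations of the Khovanov complexes $Kh(L_{1}^{\mathcal{A}})$, whose columns are linear combinations of the Khovanov complexes $Kh(L_{2}^{\mathcal{B}})$, and whose total complex is homotopic to the Khovanov complex of $L$. Boundedness is automatic since a link diagram has finitely many crossings and the admissible cut $C$ meets $L$ transversally in finitely many points, so the index sets $\mathcal{A}$ and $\mathcal{B}$ of non crossing partitions are finite. I will then filter $C^{\bullet,\bullet}$ by columns so that $E_{0}^{p,q}=C^{p,q}$ with its vertical differential; taking cohomology along each column will yield an $E_{1}$ page whose entries are $\bigoplus_{\mathcal{B}} n_{p,\mathcal{B}}\cdot Kh^{q}(L_{2}^{\mathcal{B}})$ with rational multiplicities $n_{p,\mathcal{B}}$ read off from the preceding theorem. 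The differential $d_{1}$ is induced from the horizontal differential of $C^{\bullet,\bullet}$, which by construction is assembled from morphisms between the Khovanov complexes of the surgeries $L_{1}^{\mathcal{A}}$, exactly as the statement requires. Convergence is then free: the filtration is finite, so the spectral sequence degenerates at a finite page and abuts to the cohomology of the total complex, which is $Kh(L)$ by the preceding theorem.

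The main obstacle lies in the step $E_{0}\leadsto E_{1}$, namely in passing from the cohomology of a rational linear combination of Khovanov complexes to the corresponding linear combination of Khovanov homologies. The columns are not literal direct sums of the $Kh(L_{2}^{\mathcal{B}})$, but formal combinations with rational coefficients produced by the assembly procedure, and to obtain a clean description of $E_{1}$ one must rescale and split off isotypic pieces. This rescaling requires inverting nonzero rational scalars, which is precisely where the characteristic zero hypothesis enters; in positive characteristic the required splitting may fail. I plan to handle this by examining the multiplicities $n_{p,\mathcal{B}}$ supplied by the preceding theorem, clearing denominators over $\mathbb{Q}$, and verifying that the resulting decomposition is compatible with the horizontal differential so that the induced $d_{1}$ takes the advertised form.
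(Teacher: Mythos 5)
Your overall plan coincides with the paper's: take the double complex $\mathcal{CC}(L;C)$ supplied by the preceding theorem, run the standard spectral sequence of a bounded first-quadrant double complex with the filtration that computes cohomology in the $L_{2}$-direction first, and conclude convergence to the cohomology of the total complex, which is $Kh(L)$ because the total complex is homotopic to $\mathcal{C}(L)$. That skeleton is correct.

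The problem is your step $E_{0}\leadsto E_{1}$, which is where all the content of the theorem lives, and there your description does not match the object you are filtering. The columns of the double complex are not ``formal combinations with rational coefficients'' of the complexes of the surgeries: the $p$-th column is the honest complex $\bigoplus_{\mathcal{A}\in NC_{n}} \mathcal{C}(L_{1})_{\mathcal{A}}^{p}\otimes \mathcal{C}(L_{2}^{\mathcal{A}})^{\bullet}$ with vertical differential $\bigoplus_{\mathcal{A}} id\otimes d_{2}^{\mathcal{A},\bullet}$, where the ``multiplicity'' $\mathcal{C}(L_{1})_{\mathcal{A}}^{p}$ is a genuine graded vector space, not a rational number. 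The Grothendieck-ring phrasing of the preceding theorem only concerns how these spaces are recovered from the full surgery complexes $\mathcal{C}(L_{1}^{\mathcal{B}})$ by inverting the matrix $A$; it plays no role in computing $E_{1}$, and in any case one cannot take cohomology of a formal rational combination of complexes, so the rescaling and denominator-clearing you propose cannot be carried out and is not needed. The step you are missing is the one where the characteristic-zero (field) hypothesis actually enters: over a field, tensoring with a fixed vector space is exact, hence $H^{q}\bigl(W\otimes C^{\bullet},\ id\otimes d\bigr)\cong W\otimes H^{q}(C^{\bullet},d)$ with no Tor correction (which would appear over $\mathbb{Z}$). Applying this columnwise gives $E_{1}^{p,q}\cong \bigoplus_{\mathcal{A}} \mathcal{C}(L_{1})_{\mathcal{A}}^{p}\otimes Kh^{q}(L_{2}^{\mathcal{A}})$, and the induced $d_{1}$ is $\bigoplus_{\mathcal{A}} \bigl[d_{1}^{\mathcal{A},p}\otimes id\bigr]$, whose cohomology is the advertised $E_{2}$ page. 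So: right machine, but the key identification of the first page is misdiagnosed, and the obstacle you single out is a phantom while the real use of the hypothesis goes unmentioned.
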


As a non trivial corollary of the proof, we have the following splitting formula for the Jones polynomial respect to the Khovanov parameter:

\begin{cor}
Given an admissible cut $C$ of an oriented link diagram $L$, there is a matrix $b(q)$ with entries in the field of rational functions $k(q)$ such that:
$$J(L) = \sum_{\mathcal{A},\mathcal{B}\in NC_{n}} J\left( L_{1}^{\mathcal{A}}\right)\ b_{\mathcal{A}\mathcal{B}}(q)\ J\left( L_{2}^{\mathcal{B}}\right)$$
\end{cor}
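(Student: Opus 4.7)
The plan is to deduce the splitting formula by passing from Theorem~1 to graded Euler characteristics. The unnormalized Jones polynomial of an oriented link is, up to the standard normalization, the graded Euler characteristic of its Khovanov complex, and this invariant is preserved by chain homotopy equivalence. Hence Theorem~1 reduces the problem to computing the graded Euler characteristic of the total complex of the double complex $D_{\bullet,\bullet}$ constructed there, and expressing it in terms of the Jones polynomials of the surgeries $L_{1}^{\mathcal{A}}$ and $L_{2}^{\mathcal{B}}$.

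Next I would unpack the bilinear structure of $D_{\bullet,\bullet}$. By Theorem~1 each row is a $k(q)$-linear combination of Khovanov complexes of the surgeries $L_{1}^{\mathcal{A}}$ and each column is a $k(q)$-linear combination of Khovanov complexes of the surgeries $L_{2}^{\mathcal{B}}$. Since the admissible cut $C$ separates $L$ into two tangles $T_{1},T_{2}$ meeting $C$ in $2n$ points, and the non-crossing closures of these tangles are naturally indexed by the non-crossing partitions in $NC_{n}$, I expect every bigraded entry of $D_{\bullet,\bullet}$ to factor as a pairing of a closure of $T_{1}$ against a closure of $T_{2}$ weighted by a scalar rational in $q$. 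Assembling these scalars into a matrix indexed by $NC_{n}\times NC_{n}$ produces the matrix $b(q)$ of the statement, with signs from the differentials absorbed into its entries.

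The splitting formula then follows from two elementary facts about the graded Euler characteristic $\chi_{q}$: additivity under direct sums and multiplicativity under the tangle-gluing pairing. Summing $\chi_{q}$ over the bigraded cells of $D_{\bullet,\bullet}$ and collecting like terms $\hat{J}(L_{1}^{\mathcal{A}})\,\hat{J}(L_{2}^{\mathcal{B}})$ yields exactly the claimed identity for the unnormalized Jones polynomial, and the normalized statement is obtained by the usual rescaling.

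The main obstacle I anticipate is the second step, namely justifying the bilinear factorization of the entries of $D_{\bullet,\bullet}$ and showing that the resulting coefficients genuinely lie in $k(q)$ rather than in some completion or larger coefficient ring. This rests on the linear independence, over $k(q)$, of the Khovanov complexes of the non-crossing closures of a crossingless $(2n,2n)$-tangle, and the denominators appearing in $b(q)$ come precisely from inverting the change of basis between the natural tangle basis used in Theorem~1 and the ``surgery basis'' indexed by $NC_{n}$. Once this linear-algebraic fact is in place, the corollary is a formal consequence of the Euler characteristic calculation outlined above.
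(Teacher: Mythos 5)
Your overall route is the paper's: decompose $\mathcal{C}(L)$ bilinearly over $NC_{n}\times NC_{n}$, apply the graded Euler characteristic (which kills the differentials and is additive and multiplicative), and then invert a change-of-basis matrix to pass from the internal pieces to the actual surgeries. Concretely, the paper works in the Grothendieck ring rather than with the double complex itself: Lemma \ref{Decomposition1} gives $\mathcal{C}(L)\cong\bigoplus_{\mathcal{A},\mathcal{B}}\mathcal{C}(L_{1})_{\mathcal{A}}\otimes a^{\mathcal{A}\mathcal{B}}\otimes\mathcal{C}(L_{2})_{\mathcal{B}}$ and Corollary \ref{Decomposition2} gives $\mathcal{C}(L_{i}^{\mathcal{A}})\cong\bigoplus_{\mathcal{B}}a^{\mathcal{A}\mathcal{B}}\otimes\mathcal{C}(L_{i})_{\mathcal{B}}$ with $a^{\mathcal{A}\mathcal{B}}=V^{\otimes(n+|\mathcal{A}\vee\mathcal{B}|-|\mathcal{A}\wedge\mathcal{B}|)}$; inverting the matrix $A=(a^{\mathcal{A}\mathcal{B}})$ and substituting yields \eqref{First_Groth_eq}, and applying $\chi_{q}$ together with the $(q+q^{-1})^{-1}$ normalization gives the formula with $b(q)$ equal to the inverse of $c^{\mathcal{A}\mathcal{B}}=(q+q^{-1})^{n+|\mathcal{A}\vee\mathcal{B}|-|\mathcal{A}\wedge\mathcal{B}|-1}$. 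So your plan and the paper's proof are the same computation in different clothing.

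The step you flag but do not supply is, however, exactly the one that carries the content: why the Gram matrix is invertible over $k(q)$. The paper's argument is short but necessary: in the partition lattice $|\mathcal{A}\vee\mathcal{B}|\leq|\mathcal{A}\wedge\mathcal{B}|$ with equality if and only if $\mathcal{A}=\mathcal{B}$, so each diagonal entry $(q+q^{-1})^{n-1}$ strictly dominates in degree every off-diagonal entry; hence the top-degree term of $\det(c)$ comes from the identity permutation alone and cannot cancel, so $\det(c)\neq 0$. Moreover, your formulation of the needed fact --- ``linear independence over $k(q)$ of the Khovanov complexes of the non-crossing closures of a crossingless tangle'' --- is not the right statement: those closures are disjoint unions of circles, so their complexes and Jones polynomials are all powers of $q+q^{-1}$ and are very far from linearly independent. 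What is actually needed is the nonsingularity of the \emph{pairing} matrix recording the number of circles obtained by gluing the $\mathcal{A}$-closure of one side to the $\mathcal{B}$-closure of the other (equivalently, nondegeneracy of the Temperley--Lieb trace form at generic $q$), which is precisely the matrix $c^{\mathcal{A}\mathcal{B}}$ above. With that degree argument supplied, your proof closes and coincides with the paper's.
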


\section{Khovanov homology in a nutshell}\label{Khovanov_Nutshell}

This section follows closely \cite{classicBN} \footnote{Khovanov original paper works over the ring $\Z[c]$ such that $\deg c=2$.}. For every $q$-graded vector space $W:=\bigoplus_{m} W_{m}$ with homogeneous component $W_{m}$ of $q$-degree $m$, we define its $q$-dimension as the following Laurent polynomial:
$$\textsl{q}\dim W:= \sum_{m}q^{m}\dim W_{m}$$
We also define its \textit{q degree shift} $W\{l\}:= \bigoplus_{m} W\{l\}_{m}$ such that $W\{l\}_{m}:=W_{m-l}$. In particular, we have the formula:
$$\textsl{q}\dim W\{l\}= q^{l}\ \textsl{q}\dim W$$
If $\mathcal{C}= \ldots \mathcal{C}^{r}\xrightarrow{d^{r}} \mathcal{C}^{r+1}\ldots$ is a chain complex, possibly of graded vector spaces, we denote by $\mathcal{C}[s]$ the chain complex such that $\mathcal{C}[s]^{m}= \mathcal{C}^{m-s}$ with all differentials shifted accordingly.

From now on we will consider the $q$-graded vector space:
$$V:=V_{-1}\oplus V_{1}= \left\langle v_{-}\right\rangle\oplus \left\langle v_{+}\right\rangle$$
such that $\deg_{\textsl{q}}v_{\pm}:=\pm 1$. Its $q$-dimension is simply $\textsl{q}\dim V= q+q^{-1}$.

Given an oriented link diagram $L$ consider its set of crossings $\chi$ and the respective set of states $\{0,1\}^{\chi}$. Every state $\alpha$ defines an $\alpha$-smoothing of the link $L$ following the rule of Fig. \ref{Smoothing}. Denote this smoothing by $\mathcal{S}_{\alpha}(L)$. Every smoothing is a set of disjoint circles. For every state $\alpha$ define $k(\alpha)$ as the number of circles in the respective $\alpha$-smoothing and $r(\alpha)$ as the following number:
$$r(\alpha):= \sum_{c\ \textsl{is\ a\ crossing}}\ \alpha(c)$$

Consider the set of arrows as the subset of $\{0,1,*\}^{\chi}$ such that the symbol $*$ appears only once. Evaluating the symbol $*$ on $0$ and $1$ gives the source and the target of the arrow respectively. Then, every arrow can be seen as a cobordism of circles:
$$\mathcal{S}_{a}(L):\mathcal{S}_{a(0)}(L)\rightarrow \mathcal{S}_{a(1)}(L)$$
The set of states and arrows define the cobordism category $\mathcal{S}(L)$.

\begin{figure}
\begin{center}
  \includegraphics[width=0.7\textwidth]{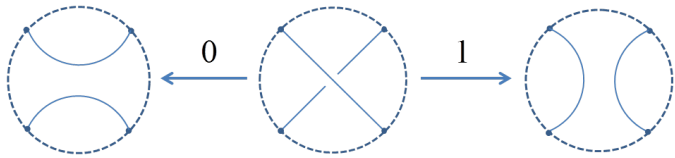}\\
  \end{center}
  \caption{Smoothing of a crossing.}\label{Smoothing}
\end{figure}

For every state $\alpha$ consider a total ordering in the respective $\mathcal{S}_{\alpha}(L)$ smoothing. Define the functor $F$ from the cobordism category $\mathcal{S}(L)$ to the category of graded vector spaces such that:
$$F\left(\mathcal{S}_{\alpha}(L)\right):= \bigotimes_{\substack{ a\in \mathcal{S}_{\alpha}(L) \\ a\ \textsl{is\ a\ circle}}} V_{a}$$
where $V_{a}$ is the $q$-graded vector space $V$ labeled by the circle $a$ in the smoothing.
The evaluation of a cobordism is a Frobenius algebra morphism\footnote{A Frobenius algebra is a finite dimensional associative algebra $A$ over a field $K$ with unit $1_{A}$ equipped with a linear functional $\varepsilon:A\rightarrow K$ such that the Kernel of $\varepsilon$ contains no nonzero left ideal of the algebra $A$. The functional $\varepsilon$ is called the \textit{counit}.} such that a pants decomposition of the cobordism evaluates as in Fig. \ref{Frobenius_Figure}. This functor $F$ is a $2D$\textit{-Topological Quantum Field Theory} according to the Moore-Segal axioms\footnote{These axioms are equivalent to the Atiyah axioms.}.

\begin{figure}
\begin{center}
  \includegraphics[width=0.7\textwidth]{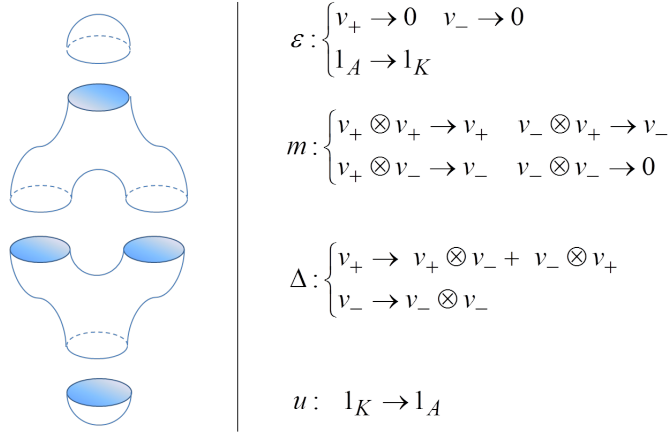}\\
  \end{center}
  \caption{A $2D$-Topological Quantum Field Theory.}\label{Frobenius_Figure}
\end{figure}

We define the chain complex $[\![L]\!]$ as follows: For every level $r$ define:
$$[\![L]\!]^{r}:=\bigoplus_{r(\alpha)=r}\ F\left(\mathcal{S}_{\alpha}(L)\right)\{r\}$$
$$d^{r}:= \bigoplus_{r(a(0))=r}\ (-1)^{sg(a)}d_{a}$$
where $d_{a}$ is the Frobenius morphism obtained by the functor $F$ evaluated on the cobordism $\mathcal{S}_{a}(L)$ in the respective degree shiftings. The sign $sg(a)$ of the arrow $a$ is defined as follows: Consider a total ordering in the set of crossings\footnote{Once we have a total order in the set of crossings, we see each state as a binary number and we order the sum $\bigoplus_{r(\alpha)=r}$ in the definition of $[\![L]\!]^{r}$ accordingly.} $\chi$ and denote by $j(*,a)$ the crossing such that the arrow $a$ evaluates to the symbol $*$. Define:
$$sg(a):=\sum_{j < j(*,a)}\ a(j)$$

Among other topological field theories, the functor $F$ was chosen in such a way that the Frobenius morphisms $d_{a}$ is a degree zero morphism \footnote{As an example of a topological field theory that doesn't have this property, consider the Eun Soo Lee functor \cite{Lee}.}. In particular, this grading is preserved in cohomology.

\begin{figure}
\begin{center}
  \includegraphics[width=0.4\textwidth]{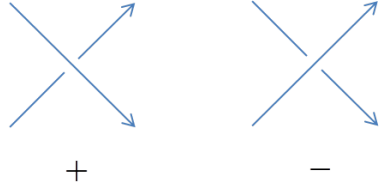}\\
  \end{center}
  \caption{Sign of a crossing.}\label{Sign_Crossing}
\end{figure}

We define the sign of a crossing as in Fig. \ref{Sign_Crossing}. Consider the number of positive and negative crossings $l^{+}$ and $l^{+}$ of the oriented link diagram $L$. The Khovanov complex is defined as follows:
$$\mathcal{C}(L):=[\![L]\!][-l^{-}]\{l^{+}-2l^{-}\}$$
The Khovanov complex is the categorification of the Jones polynomial in the sense that its grading Euler characteristic recovers this link invariant:
$$J(L)(q)= (q+q^{-1})^{-1}\ \chi_{\textsl{q}}(\mathcal{C}(L))$$
where $\chi_{q}$ is the $q$-graded Euler characteristic. The Khovanov parameter $q$ is related to the usual Jones parameter $t$ by the relation $q= -t^{1/2}$.

Up to homotopy, the Khovanov complex is a link invariant \cite{Khovanov_original}, \cite{classicBN}:

\begin{teo}\label{Khovanov_Homotopy}
The complex $\mathcal{C}(L)$ is a chain complex such that up to homotopy, it is defined on the ambient isotopic classes of oriented link diagrams; i.e. Different orderings in the set of crossings, set of smoothings and Reidemeister moves on the link diagram give homotopic chain complexes.
\end{teo}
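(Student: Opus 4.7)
The plan is to verify the claim in four stages, mirroring the standard proof strategy of Khovanov and Bar-Natan. First I would check that $\mathcal{C}(L)$ really is a chain complex, i.e.\ $d^{r+1}\circ d^{r}=0$. By the definition of $d$, each composition $d_{a_{1}}\circ d_{a_{0}}$ corresponds to the cobordism associated to a square face of the cube $\{0,1\}^{\chi}$ obtained by resolving two specific crossings in both possible orders. Since $F$ is a $2D$-TQFT and the two paths around the square yield diffeomorphic cobordisms, the unsigned contributions agree; the sign convention $sg(a)$ was designed precisely so that the two contributions appear with opposite signs, so they cancel. This is a purely combinatorial verification on the cube and does not depend on the diagram.

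Next I would dispose of the auxiliary choices. Changing the total ordering of the crossings amounts to an even or odd permutation of the cube coordinates; because $sg(a)$ is defined as a partial sum of coordinates preceding the starred one, any transposition of two consecutive crossings alters the signs on exactly a collection of parallel faces, and the resulting complex is seen to be isomorphic (by a diagonal sign change on the direct summands) to the original. Reordering the circles in each smoothing simply relabels tensor factors and yields a canonically isomorphic complex because tensor product of vector spaces is symmetric monoidal. Both steps give chain isomorphisms, hence a fortiori homotopy equivalences.

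The substantive content is invariance under the three Reidemeister moves, and this is where the main obstacle lies. For each move one considers a local tangle and splits the cube of resolutions along the crossings internal to the tangle. I would handle R1 and R2 first: in each case the local cube has a small number of vertices, and one exhibits an explicit deformation retract of the associated local complex onto a simpler complex matching the one obtained from the other side of the move. Concretely, Reidemeister I uses the relations \(\varepsilon(v_{+})=0\), \(\varepsilon(v_{-})=1\) from the Frobenius structure in Fig.~\ref{Frobenius_Figure} to kill an extra tensor factor \(V\{1\}\), while Reidemeister II requires identifying a null-homotopic direct summand and explicitly writing down the contracting homotopy \(h\) with \(dh+hd=\mathrm{id}\) on the acyclic piece.

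The hard part will be Reidemeister III. Here the local cube has eight vertices on each side of the move and one cannot naively match them. The standard device is to use R2 twice to reduce both sides to complexes that share a common direct summand, and then invoke the categorical ``cancellation lemma'' (Gaussian elimination of an isomorphism appearing as a matrix entry of the differential) to contract them to the same complex up to homotopy. Throughout one must keep careful track of the shifts \([-l^{-}]\{l^{+}-2l^{-}\}\): R1 changes $l^{\pm}$ by one and the chosen shifts precisely compensate, while R2 and R3 preserve $l^{+}-l^{-}$ and $l^{+}+l^{-}$ suitably. Once all three moves are checked, ambient isotopy invariance follows from the Reidemeister theorem, completing the proof.
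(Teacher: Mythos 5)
Your outline is the standard invariance argument of Khovanov and Bar-Natan, which is exactly what the paper relies on: it states this theorem without proof, citing \cite{Khovanov_original} and \cite{classicBN}. Your four stages ($d^2=0$ via anticommuting square faces of the cube, isomorphism under reordering of crossings and circles, explicit retractions for R1 and R2, and R3 via reduction to R2 plus cancellation of an acyclic piece, with the shifts $[-l^{-}]\{l^{+}-2l^{-}\}$ absorbing the change in writhe) match the cited proofs, so the proposal is correct in approach, though of course it remains a sketch whose R3 step would need the explicit homotopies to be complete.
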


In paticular, the \textit{Khovanov homology}\footnote{As the reader may have noticed, it is actually a cohomology.} is a link invariant:
$$Kh_{K}(L):=H\left(\mathcal{C}(L)\right)$$
The Poincar\'e polynomial of the Khovanov homology is called the \textit{Khovanov polynomial} of the link. In particular, the characteristic zero Khovanov polynomial recovers the Jones polynomial:
$$J(L)(q)=(q+q^{-1})^{-1}\ Kh_{\Q}(L)(-1,q)$$

\section{Admissible cuts and surgeries}

We define the notion of non crossing partitions as follows: Consider the k-th character $g_{k}:\R\rightarrow \C$ such that $g_{k}(t)=\exp(2\pi i t/k)$. For every partition $\mathcal{A}\in \Gamma_{n}$ such that $\mathcal{A}= \{m_{1},\ m_{2},\ldots m_{l}\}$ define:
$$Convex(\mathcal{A})=\{Convex\left(g_{n}(m_{1})\right),\ Convex\left(g_{n}(m_{2})\right),\ldots Convex\left(g_{n}(m_{l})\right)\}$$
where $Convex(S)$ denotes the convex hull of the set $S$ in $\C$. A partition $\mathcal{A}\in \Gamma_{n}$ will be called \textit{non crossing} \footnote{Non crossing partitions were introduced by Kreweras in \cite{Kreweras} and since then they have been widely used in different branches of mathematics \cite{mccammond}.} if for every pair of distinct convex sets in $Convex(\mathcal{A})$ their intersection is empty. The subset of non crossing partitions will be denoted by $NC_{n}$.

The permutation group $S_{n}$ acts on the set of $n$-partitions $\Gamma_{n}$ as follows: For every $n$-partition $\mathcal{A}= \{m_{1},\ m_{2},\ldots m_{l}\}$ we define:
$$\sigma\cdot\mathcal{A}:= \{\sigma(m_{1}),\ \sigma(m_{2}),\ldots \sigma(m_{l})\}$$
for every permutation $\sigma\in S_{n}$. See that the non crossing permutations $NC_{n}$ is closed under the action of the Dihedral group $D_{n}$ and this is the maximal permutation subgroup with this property.

\begin{defi}
An admissible cut of an oriented link diagram $L$ is a Jordan curve $C$ of the plane such that $C$ is transversal to the link diagram $L$ \footnote{In particular, the Jordan curve doesn't intersect any crossing of the link diagram $L$.} and, giving $C$ some orientation and walking the curve $C$ along this orientation, the intersection points orientation alternate.
\end{defi}

\begin{figure}
\begin{center}
  \includegraphics[width=0.4\textwidth, height=0.23\textheight]{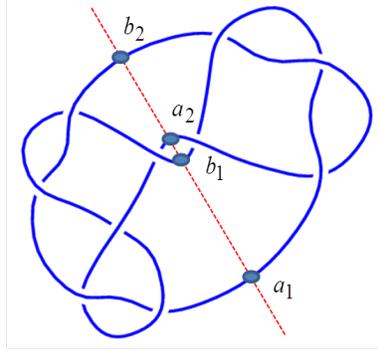}\\
  \end{center}
  \caption{Admissible cut of the knot $\overline{10_{25}}$.}\label{Admissible_Cut}
\end{figure}


Consider an oriented admissible cut $C$ with a marked point $c\in C$. Because of degree theory, there must be $2n$ intersection point with $L$, half of them positively oriented and the other half negatively oriented. Denote these points by:
$$a_{1},b_{1},a_{2},b_{2},\ldots a_{n},b_{n}$$
as we go through the curve $C$ along its orientation starting in the marking $c$. We choose the marking in such a way that $a_{1}$ is positively oriented\footnote{This is achieved just moving the marking along the admissible cut.}.

Consider the admissible cut $C$ in the one point compactification of the plane. The Jordan curve $C$ separates the sphere $S^{2}$ in two regions $R1$ and $R2$, each one diffeomorphic to the disk $\Delta$ by the diffeomorphisms $\varphi_{1}$ and $\varphi_{2}$ respectively. One of them preserves the orientation and the other one reverses it. We choose the regions in such a way that $\varphi_{2}$ preserves the orientation. These diffeomorphisms have unique extensions to the boundary of $R1$ and $R2$ respectively and we will denote these extensions with the same name. We choose these diffeomorphisms in such a way that their extensions map the intersection points:
$$a_{1},b_{1},\ldots a_{n},b_{n}$$
to the points in the circle:
$$g_{2n}(1),g_{2n}(2),\ldots g_{2n}(2n-1),g_{2n}(2n)=0$$
respectively.

We define the surgeries as follows: Given a non crossing partition $\mathcal{A}\in NC_{n}$ such that $\mathcal{A}= \{m_{1},m_{2},\ldots m_{l}\}$ and $i_{l,1}< i_{l,2}<\ldots i_{l,k_{l}}$ are all the elements in the class $m_{l}$, we define:

$$\vec{\mathcal{A}}= \bigsqcup_{l} \overrightarrow{[g_{2n}(2i_{l,1}),g_{2n}(2i_{l,2}-1)]}\sqcup \overrightarrow{[g_{2n}(2i_{l,2}),g_{2n}(2i_{l,3}-1)]}\sqcup\ldots \overrightarrow{[g_{2n}(2i_{l,k_{l}}),g_{2n}(2i_{l,1}-1)]}$$
where $\overrightarrow{[b,a]}$ denotes the oriented line segment from the point $b$ to the point $a$ such that $a,b\in\C$. See that $\vec{\mathcal{A}}$ is a disjoint union of oriented segments starting at some $b_{i}$ point to another $a_{j}$ point in the circle. We define the oriented link diagram surgeries:
\begin{eqnarray}\label{surgeriesDef}
L_{1}^{\mathcal{A}} &=& (L\cap R1)\sqcup \varphi_{2}^{-1}(\vec{\mathcal{A}}) \\
L_{2}^{\mathcal{A}} &=& (L\cap R2)\sqcup \varphi_{1}^{-1}(\vec{\mathcal{A}}^{op})
\end{eqnarray}
where $\vec{\mathcal{A}}^{op}$ is the diagram $\vec{\mathcal{A}}$ with the opposite orientation.

Because the set of non crossing partitions is closed under the Dihedral group action, the set of these surgeries is independent of the orientation and marking of the Jordan curve $C$ and modulo ambient isotopy it is independent of the regions $R1,R2$ and the diffeomorphisms $\varphi_{1},\varphi_{2}$ previously chosen; i.e. Modulo ambient isotopy, the collection of surgeries only depends on the admissible cut as it was defined.

Because our construction neither modify the existing crossing nor adds new ones, we have that the number of positive/negative crossings $l_{1}^{\pm}$ is the same for all the surgeries $\{L_{1}^{\mathcal{A}}\}$ and a similar result for $\{L_{2}^{\mathcal{A}}\}$. Moreover,

\begin{equation}\label{relationSigns}
l^{\pm}= l_{1}^{\pm}+ l_{2}^{\pm}
\end{equation}
where $l^{\pm}$ denote the number of positive/negative crossings of the oriented link $L$. In other words, the number of crossings and the writhe are additive respect to the cut.

The surgeries $L_{i}^{\mathcal{A}}$ have the common set of crossings $\chi_{i}:= \chi\cap R_{i}$ and the set of crossings $\chi$ of the oriented link $L$ is the disjoint union:
$$\chi=\chi_{1}\sqcup\chi_{2}$$
This way, the set of states decomposes as:
$$\{0,1\}^{\chi}= \{0,1\}^{\chi_{1}}\times \{0,1\}^{\chi_{2}}$$
In other words, every state $\alpha\in \{0,1\}^{\chi}$ gives a pair of states $\alpha_{i}\in \{0,1\}^{\chi_{i}}$ for $i=1,2$ such that $\alpha= (\alpha_{1},\alpha_{2})$.

From now on, we will denote simply by $L_{1}$ and $L_{2}$ the respective $full$-surgeries $L_{1}^{full}$ and $L_{2}^{full}$ where $full$ denotes the partition $\{\{1\},\{2\},\ldots \{n\}\}$. For $i=1,2$, we define the following maps:
$$\mathcal{C}_{i}:\{0,1\}^{\chi_{i}}\rightarrow NC_{n}$$
such that:
\begin{itemize}
\item If the set of crossings $\chi_{i}$ is empty, define the equivalence relation $\sim_{\emptyset}$ such that $i\sim_{\emptyset} j$ if $a_{i}$ and $a_{j}$ belong to the same connected component of the $full$-surgery $L_{i}$. We define:
$$\mathcal{C}_{i}(*)= \{1,2,\ldots n\}/\sim_{\emptyset}$$
where $*$ is the only state of $\{0,1\}^{\emptyset}$.

\item If the set of crossings $\chi_{i}$ is nonempty, for every state $\beta\in \{0,1\}^{\chi_{i}}$ define the equivalence relation $\sim_{\beta}$ such that $i\sim_{\beta} j$ if $a_{i}$ and $a_{j}$ belong to the same connected component of the smoothing $\mathcal{S}_{\beta}(L_{i})$. We define:
$$\mathcal{C}_{i}(\beta)= \{1,2,\ldots n\}/\sim_{\beta}$$
\end{itemize}
Because different connected components do not intersect, the obtained partition must be non crossing.

Given an admissible cut of a link diagram $L$, we say that a circle in the smoothing $\mathcal{S}_{\alpha}(L)$ is \textit{inner} if it doesn't contain any point of the cut; i.e. it doesn't contain any of the points $a_{i},\ b_{j}$. Otherwise, the circle will be called \textit{outer}. The set of these circles will be denoted by $Inner_{\alpha}(L)$ and $Outer_{\alpha}(L)$ respectively and their disjoint union is the smoothing $\mathcal{S}_{\alpha}(L)$.

\begin{prop}\label{Bijection_Natural}
Given a link diagram $L$ and an admissible cut, for every state $\alpha=(\alpha_{1},\alpha_{2})$ we have natural one to one maps:
$$\mathcal{S}_{\alpha}(L)\cong Inner_{\alpha_{1}}(L_{1})\sqcup \mathcal{S}_{\alpha_{2}}\left( L_{2}^{\mathcal{C}_{1}(\alpha_{1})}\right)
\cong \mathcal{S}_{\alpha_{1}}\left( L_{1}^{\mathcal{C}_{2}(\alpha_{2})}\right)\sqcup Inner_{\alpha_{2}}(L_{2})$$
\end{prop}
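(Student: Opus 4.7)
The plan is to construct the first bijection explicitly; the second then follows by symmetry, exchanging the roles of $R_1$ and $R_2$. I partition the circles of $\mathcal{S}_\alpha(L)$ into three classes according to whether each circle lies entirely in $R_1$, entirely in $R_2$, or meets the cut $C$. The first class coincides literally with $Inner_{\alpha_1}(L_1)$, because the closure defining $L_1 = L_1^{full}$ lies in $R_2$ and leaves these $R_1$-circles intact. By the symmetric argument, the second class coincides with the inner circles of $\mathcal{S}_{\alpha_2}(L_2^{\mathcal{C}_1(\alpha_1)})$, since the surgery $L_2^{\mathcal{C}_1(\alpha_1)}$ only alters the $R_1$-side. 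What remains is to produce a natural bijection between the circles of $\mathcal{S}_\alpha(L)$ meeting $C$ and the outer circles of $\mathcal{S}_{\alpha_2}(L_2^{\mathcal{C}_1(\alpha_1)})$.

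Both these sets arise by assembling, along the cut $C$, a common pattern of $R_2$-arcs, namely $\mathcal{S}_\alpha(L) \cap R_2 = \mathcal{S}_{\alpha_2}(L_2) \cap R_2$ with inner circles discarded, together with a non-crossing chord diagram in $R_1$ on the $2n$ cut points $a_1, b_1, \ldots, a_n, b_n$. On the $\mathcal{S}_\alpha(L)$-side this chord diagram is $D_1 := \mathcal{S}_\alpha(L) \cap R_1$ with its inner circles removed; on the $L_2^{\mathcal{C}_1(\alpha_1)}$-side it is the closure diagram $D_1' := \varphi_1^{-1}(\vec{\mathcal{C}_1(\alpha_1)}^{op})$. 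The heart of the proof is to establish $D_1 = D_1'$ up to isotopy rel boundary.

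For this I invoke the classical bijection between non-crossing chord diagrams on $2n$ cyclically ordered boundary points $a_1, b_1, \ldots, a_n, b_n$ of a disk and non-crossing partitions of $\{1, \ldots, n\}$: a diagram $D$ is mapped to the partition induced on $\{1, \ldots, n\}$ by the connected components of $D \cup \rho$, where $\rho$ denotes the chord diagram pairing $a_i$ with $b_i$ for each $i$. Since $\mathcal{S}_{\alpha_1}(L_1)$ equals $D_1 \cup \rho$ together with inner circles in $R_1$ touching no $a_i$ nor $b_j$, the partition $\mathcal{C}_1(\alpha_1)$ is precisely the image of $D_1$ under this bijection. By direct inspection of the definition of $\vec{\mathcal{A}}$, each class $\{i_1 < \cdots < i_k\}$ of $\mathcal{C}_1(\alpha_1)$ produces in $D_1' \cup \rho$ the single cycle $a_{i_1}\text{-}b_{i_1}\text{-}a_{i_2}\text{-}b_{i_2}\text{-}\cdots\text{-}a_{i_k}\text{-}b_{i_k}\text{-}a_{i_1}$, so $D_1'$ is also sent to $\mathcal{C}_1(\alpha_1)$. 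Injectivity of the bijection forces $D_1 = D_1'$.

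With $D_1 = D_1'$ established, the two assembly processes produce identical families of closed curves in $S^2$, and combining this identification with those for inner circles yields the first isomorphism of the proposition. The main obstacle is therefore the non-crossing chord diagram / non-crossing partition bijection; both sides are enumerated by the $n$-th Catalan number, and a short inductive proof proceeds by locating a chord between two cyclically adjacent boundary points and contracting.
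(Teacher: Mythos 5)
Your argument follows the same route as the paper's: identify the circles of $\mathcal{S}_{\alpha}(L)$ lying entirely in $R_1$ with $Inner_{\alpha_{1}}(L_{1})$, and match each remaining circle to the unique circle of $\mathcal{S}_{\alpha_{2}}\left(L_{2}^{\mathcal{C}_{1}(\alpha_{1})}\right)$ having the same intersection with $R_2$. The paper compresses the second step into the phrase ``by construction of the map $\mathcal{C}_{1}$, there is a unique circle $S'$ with $S\cap R_{2}=S'\cap R_{2}$''; your explicit verification via the non-crossing-matching/non-crossing-partition correspondence is precisely the content behind that phrase, so the proposal is correct and essentially identical in approach, just more detailed at the key step.
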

\begin{proof}
We prove one bijection only for the second is verbatim. For every crossing of the link diagram $\chi$, take the smoothing neighborhood in Fig. \ref{Smoothing} small enough such that it belongs to one of the regions $R_{1}$ or $R_{2}$. Then, the set of circles in the smoothing $\mathcal{S}_{\alpha}(L)$ that belong to $R_{1}$ equals the set $Inner_{\alpha_{1}}(L_{1})$ hence the identity is a natural map between these sets. If a circle $S$ in the smoothing $\mathcal{S}_{\alpha}(L)$ doesn't belong to $R_{1}$ then its intersection with $R_{2}$ is nonempty. By construction of the map $\mathcal{C}_{1}$, there is a unique circle $S'$ in the smoothing $\mathcal{S}_{\alpha_{2}}\left( L_{2}^{\mathcal{C}_{1}(\alpha_{1})}\right)$ such that $S\cap R_{2}= S'\cap R_{2}$. Conversely, by the same argument, for every circle $S'$ in the smoothing $\mathcal{S}_{\alpha_{2}}\left( L_{2}^{\mathcal{C}_{1}(\alpha_{1})}\right)$ there is a unique circle $S$ in the smoothing $\mathcal{S}_{\alpha}(L)$ such that $S\cap R_{2}= S'\cap R_{2}$ and clearly doesn't belong to $R_{1}$. This correspondence defines a natural one to one map.
\end{proof}

By definition and a similar argument to the one in the proof of Proposition \ref{Bijection_Natural}, given a state $\alpha_{i}\in \{0,1\}^{\chi_{i}}$ we conclude that the sets $Inner_{\alpha_{i}}(L_{i}^{\mathcal{A}})$ are all equal:
$$Inner_{\alpha_{i}}(L_{i}^{\mathcal{A}})=Inner_{\alpha_{i}}(L_{i}^{\mathcal{B}})\ \ \ \textsl{for\ every}\ \mathcal{A},\mathcal{B}\in NC_{n}$$
In particular, for every non crossing partition $\mathcal{A}$ there is a natural bijection:
\begin{equation}\label{Bijection_Natural_II}
Inner_{\alpha}(L_{i})\cong Inner_{\alpha}(L_{i}^{\mathcal{A}})
\end{equation}
Because the bijections in Proposition \ref{Bijection_Natural} are natural, we have:

\begin{cor}\label{Bijection_Natural_IV}
Given a link diagram $L$ and an admissible cut, for every state $\alpha=(\alpha_{1},\alpha_{2})$ we have natural one to one maps:
$$Outer_{\alpha}(L)\cong Outer_{\alpha_{2}}\left( L_{2}^{\mathcal{C}_{1}(\alpha_{1})}\right)
\cong Outer_{\alpha_{1}}\left( L_{1}^{\mathcal{C}_{2}(\alpha_{2})}\right)$$
\end{cor}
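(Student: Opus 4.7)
The plan is to extract this corollary directly from Proposition \ref{Bijection_Natural} by tracking, circle by circle, which elements of each smoothing are inner and which are outer, then checking that the bijection of the proposition respects this dichotomy.

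First I would write each total smoothing as the disjoint union of its inner and outer parts: $\mathcal{S}_{\alpha}(L) = Inner_{\alpha}(L)\sqcup Outer_{\alpha}(L)$ and $\mathcal{S}_{\alpha_{2}}(L_{2}^{\mathcal{C}_{1}(\alpha_{1})}) = Inner_{\alpha_{2}}(L_{2}^{\mathcal{C}_{1}(\alpha_{1})})\sqcup Outer_{\alpha_{2}}(L_{2}^{\mathcal{C}_{1}(\alpha_{1})})$, and invoke the identification (\ref{Bijection_Natural_II}) to replace $Inner_{\alpha_{2}}(L_{2}^{\mathcal{C}_{1}(\alpha_{1})})$ by $Inner_{\alpha_{2}}(L_{2})$. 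A circle in any such smoothing is outer precisely when it crosses the Jordan curve $C$, i.e.\ meets one of the points $a_{i},b_{j}$; equivalently, it fails to be contained in a single region $R_{1}$ or $R_{2}$.

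Next I would revisit the two clauses of the bijection constructed in Proposition \ref{Bijection_Natural}. In the first clause, a circle of $\mathcal{S}_{\alpha}(L)$ that lies entirely in $R_{1}$ is, on one hand, automatically inner in $L$ (it does not touch $C$) and, on the other hand, sent by the identity to an element of $Inner_{\alpha_{1}}(L_{1})$. In the second clause, a circle $S$ of $\mathcal{S}_{\alpha}(L)$ not contained in $R_{1}$ is matched with the unique circle $S'$ of $\mathcal{S}_{\alpha_{2}}(L_{2}^{\mathcal{C}_{1}(\alpha_{1})})$ satisfying $S\cap R_{2} = S'\cap R_{2}$. The key observation is that $S$ is inner in $L$ if and only if $S\subset R_{2}$, i.e.\ $S\cap R_{2}$ is a closed curve disjoint from $C$, which happens if and only if $S'$ does not pass through any of the marked points, i.e.\ $S'$ is inner in $L_{2}^{\mathcal{C}_{1}(\alpha_{1})}$. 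Consequently the bijection of Proposition \ref{Bijection_Natural} restricts to bijections $Inner_{\alpha}(L)\cap R_{2}\cong Inner_{\alpha_{2}}(L_{2}^{\mathcal{C}_{1}(\alpha_{1})})$ and $Outer_{\alpha}(L)\cong Outer_{\alpha_{2}}(L_{2}^{\mathcal{C}_{1}(\alpha_{1})})$, which yields the first claimed isomorphism. The second isomorphism follows verbatim from the symmetric bijection in Proposition \ref{Bijection_Natural}.

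There is no real obstacle here beyond bookkeeping: the only thing to verify carefully is that "inner/outer" is the same notion on both sides of the matching, and this is immediate from the observation $S\cap R_{2}=S'\cap R_{2}$, since touching $C$ is a property of $S\cap R_{2}$ alone. Naturality of the resulting maps is inherited from the naturality already established in Proposition \ref{Bijection_Natural}, so no additional argument is required.
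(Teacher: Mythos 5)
Your proposal is correct and follows the same route as the paper, which simply asserts the corollary as a consequence of the naturality of the bijections in Proposition \ref{Bijection_Natural} together with the identification \eqref{Bijection_Natural_II} of the inner circles. You have merely written out the bookkeeping the paper leaves implicit — in particular the key observation that membership of a circle in $Outer$ is detected by its intersection with $R_{2}$ alone, which is exactly what the matching $S\cap R_{2}=S'\cap R_{2}$ preserves.
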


\subsection{Order convention}\label{Order_Compatible}

Given an admissible cut of a link diagram $L$ and a state $\alpha$ we define the following total order relation on $Outer_{\alpha}\left(L\right)$:
$$S<S'\ \ \ \textsl{if}\ \ \ \min\{i\ \ \textsl{such\ that}\ \ a_{i}\in S\}<\min\{i\ \ \textsl{such\ that}\ \ a_{i}\in S'\}$$
for different circles $S$ and $S'$ in $Outer_{\alpha}\left(L\right)$.

Because of the natural bijection \eqref{Bijection_Natural_II}, we have the natural bijection:
\begin{equation}\label{Bijection_Natural_III}
\mathcal{S}_{\alpha_{i}}\left( L_{i}^{\mathcal{A}}\right)= Inner_{\alpha_{i}}\left( L_{i}^{\mathcal{A}}\right)\sqcup Outer_{\alpha_{i}}\left( L_{i}^{\mathcal{A}}\right)\cong Inner_{\alpha_{i}}\left( L_{i}\right)\sqcup Outer_{\alpha_{i}}\left( L_{i}^{\mathcal{A}}\right)
\end{equation}

For $i=1,2$ and for every state $\alpha_{i}\in \{0,1\}^{\chi_{i}}$ consider a total ordering on the set $Inner_{\alpha_{i}}(L_{i})$. We extend these orderings to total orderings on the smoothings as follows\footnote{We say that $A<B$ if $a<b$ for every pair of elements $a$ and $b$ in $A$ and $B$ respectively.}:
\begin{itemize}
\item For every surgery $L_{1}^{\mathcal{A}}$ and every state $\alpha_{1}\in \{0,1\}^{\chi_{1}}$ we extend this order to $\mathcal{S}_{\alpha_{1}}\left( L_{1}^{\mathcal{A}}\right)$ via \eqref{Bijection_Natural_III} such that $Inner_{\alpha_{1}}(L_{1}) < Outer_{\alpha_{1}}\left( L_{1}^{\mathcal{A}}\right)$.
\item For every surgery $L_{2}^{\mathcal{A}}$ and every state $\alpha_{2}\in \{0,1\}^{\chi_{2}}$ we extend this order to $\mathcal{S}_{\alpha_{2}}\left( L_{2}^{\mathcal{A}}\right)$ via \eqref{Bijection_Natural_III} such that $Inner_{\alpha_{2}}(L_{2}) > Outer_{\alpha_{2}}\left( L_{2}^{\mathcal{A}}\right)$.
\end{itemize}
This ordering will be called a \textit{compatible order respect to the cut}.

Finally, via Proposition \ref{Bijection_Natural} this convention defines a total order on $\mathcal{S}_{\alpha}(L)$. In effect, for every state $\alpha=(\alpha_{1}, \alpha_{2})$ there is a natural bijection:
$$\mathcal{S}_{\alpha}(L)\cong Inner_{\alpha_{1}}(L_{1})\sqcup Outer_{\alpha}(L) \sqcup Inner_{\alpha_{2}}(L_{2})$$
such that:
$$Inner_{\alpha_{1}}(L_{1}) < Outer_{\alpha}(L) < Inner_{\alpha_{2}}(L_{2})$$
This is well defined because of Corollary \ref{Bijection_Natural_IV}. This ordering will be called \textit{the order inherited from the compatible order respect to the cut}.

\section{Double Khovanov complex}

Consider partitions $\mathcal{A}$ and $\mathcal{B}$ in $\Gamma_{n}$. We write $\mathcal{A}\prec\mathcal{B}$ if $\mathcal{A}$ is finer than $\mathcal{B}$; i.e. if for every $a\in\mathcal{A}$ there is $b\in\mathcal{B}$ such that $a\subset b$. We define the products:

$$\mathcal{A}\vee \mathcal{B}= min\{\mathcal{C}\in \Gamma_{n}\ such\ that\ \mathcal{A}\prec \mathcal{C}\ and\ \mathcal{B}\prec \mathcal{C}\}$$
$$\mathcal{A}\wedge \mathcal{B}= max\{\mathcal{C}\in \Gamma_{n}\ such\ that\ \mathcal{A}\succ \mathcal{C}\ and\ \mathcal{B}\succ \mathcal{C}\}$$

See that $\Gamma_{n}^{\wedge}:=(\Gamma_{n},\wedge, trivial)$ and $\Gamma_{n}^{\vee}:=(\Gamma_{n}, \vee, full)$ are commutative monoids where $trivial=\{\{1,2,\ldots n\}\}$ and $full= \{\{1\},\{2\},\ldots \{n\}\}$ are the respective units. These products satisfy the following compatibility relations:
\begin{eqnarray*}
\mathcal{A}\wedge(\mathcal{B}\vee\mathcal{C})&=& (\mathcal{A}\wedge\mathcal{B})\vee(\mathcal{A}\wedge\mathcal{C}) \\
\mathcal{A}\vee(\mathcal{B}\wedge\mathcal{C})&=& (\mathcal{A}\vee\mathcal{B})\wedge(\mathcal{A}\vee\mathcal{C}) \\
\mathcal{A}\wedge full &=& full \\
\mathcal{A}\vee trivial &=& trivial
\end{eqnarray*}
for every tender of partitions $\mathcal{A}, \mathcal{B}$ and $\mathcal{C}$. The subset of non crossing partitions $NC_{n}$ is closed under $\wedge$ but not under $\vee$; i.e. it is a submonoid of $\Gamma_{n}^{\wedge}$ but not of $\Gamma_{n}^{\vee}$:
$$(NC_{n}, \wedge, trivial)<\Gamma_{n}^{\wedge}$$

Now, consider an admissible cut $C$ of an oriented link diagram $L$. Just as in the previous section, we have the intersection points:
$$a_{1},b_{1},a_{2},b_{2},\ldots a_{n},b_{n}$$

\begin{defi}\label{Bigraded_vector_space}
For every non crossing partition $\mathcal{A}\in NC_{n}$ and $i=1,2$, we define the bigraded vector space:
$$[\![L_{i}]\!]_{\mathcal{A}}:= \bigoplus_{\substack{\alpha\in\{0,1\}^{\chi_{i}} \\ \mathcal{C}_{i}(\alpha)= \mathcal{A}}} V^{\otimes(k_{i}(\alpha)-|\mathcal{A}|)}\{r(\alpha)\}[r(\alpha)]$$
\end{defi}

See that, because the set of circles containing the intersection points is a subset of the set of circles of the smoothing $\mathcal{S}_{\alpha}(L_{i})$ for every state $\alpha$, we have that $|\mathcal{C}_{i}(\alpha)|\leq k_{i}(\alpha)$ for every state $\alpha$. In particular, the expression above is well defined. The Khovanov version of the bigraded vector space defined in \ref{Bigraded_vector_space} is obtained by shifting degrees:
$$\mathcal{C}(L_{i})_{\mathcal{A}}:=[\![L_{i}]\!]_{\mathcal{A}}[-l_{i}^{-}]\{l_{i}^{+}-2l_{i}^{-}\}$$
See also that if $L_{i}$ has no crossings, then \footnote{This can be seen as the categorification of the delta Kronecker.}:

\begin{equation}\label{remarkdelta}
\mathcal{C}(L_{i})_{\mathcal{A}}:=
\left\{
\begin{array}{c l}
 k		&\ \ \ \ \ \ if\ \  \mathcal{A}=\mathcal{C}_{i}(*) \\
 (0)	&\ \ \ \ \ \ if\ \  \mathcal{A}\neq\mathcal{C}_{i}(*)
\end{array}
\right.
\end{equation}

\begin{lema}\label{Decomposition1}
We have the following isomorphism of bigraded vector spaces:
$$\mathcal{C}(L)\cong \bigoplus_{\mathcal{A},\mathcal{B}\in NC_{n}} \mathcal{C}(L_{1})_{\mathcal{A}}\otimes a^{\mathcal{A}\mathcal{B}}\otimes \mathcal{C}(L_{2})_{\mathcal{B}}$$
such that:
$$a^{\mathcal{A}\mathcal{B}}:= V^{\otimes(n+|\mathcal{A}\vee\mathcal{B}|-|\mathcal{A}\wedge\mathcal{B}|)}$$
\end{lema}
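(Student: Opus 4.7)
The plan is to match the two sides term by term as bigraded vector spaces, indexed by pairs of states on the two sides of the cut; the central difficulty will be a purely combinatorial identity about non-crossing matchings. First I would unpack the left-hand side. Using $\mathcal{C}(L)=[\![L]\!][-l^{-}]\{l^{+}-2l^{-}\}$ together with the bijection $\{0,1\}^{\chi}=\{0,1\}^{\chi_{1}}\times\{0,1\}^{\chi_{2}}$ and the additivity relations $r(\alpha)=r(\alpha_{1})+r(\alpha_{2})$ and $l^{\pm}=l_{1}^{\pm}+l_{2}^{\pm}$ from \eqref{relationSigns}, the complex $\mathcal{C}(L)$ becomes a direct sum over $(\alpha_{1},\alpha_{2})$ of $V^{\otimes k(\alpha)}$ whose bigraded shifts split precisely as the sum of the shifts for $L_{1}$ and $L_{2}$.

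Next, Proposition \ref{Bijection_Natural} together with Corollary \ref{Bijection_Natural_IV} yields
$$\mathcal{S}_{\alpha}(L)\cong Inner_{\alpha_{1}}(L_{1})\sqcup Outer_{\alpha}(L)\sqcup Inner_{\alpha_{2}}(L_{2}),$$
so $V^{\otimes k(\alpha)}\cong V^{\otimes |Inner_{\alpha_{1}}(L_{1})|}\otimes V^{\otimes |Outer_{\alpha}(L)|}\otimes V^{\otimes |Inner_{\alpha_{2}}(L_{2})|}$. On the right-hand side, Definition \ref{Bigraded_vector_space} combined with the observation that the outer circles of $\mathcal{S}_{\alpha_{i}}(L_{i})$ are labelled exactly by the parts of $\mathcal{C}_{i}(\alpha_{i})$ (each outer circle of the full-surgery meets the $a_{j}$'s of a single part, and correspondingly the $b_{j}$'s via the full-closure arcs) gives $k_{i}(\alpha_{i})-|\mathcal{A}|=|Inner_{\alpha_{i}}(L_{i})|$ whenever $\mathcal{C}_{i}(\alpha_{i})=\mathcal{A}$; hence $\mathcal{C}(L_{i})_{\mathcal{A}}$ is nothing but the inner-circle tensor factor summed over states $\alpha_{i}$ with $\mathcal{C}_{i}(\alpha_{i})=\mathcal{A}$, carrying the shifts by $r(\alpha_{i})$ and $l_{i}^{\pm}$.

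After these identifications, grouping the summands on the left according to $(\mathcal{A},\mathcal{B}):=(\mathcal{C}_{1}(\alpha_{1}),\mathcal{C}_{2}(\alpha_{2}))\in NC_{n}\times NC_{n}$ reduces the claim to the single combinatorial identity
$$|Outer_{\alpha}(L)|\;=\;n+|\mathcal{A}\vee\mathcal{B}|-|\mathcal{A}\wedge\mathcal{B}|,$$
which accounts exactly for the middle factor $a^{\mathcal{A}\mathcal{B}}=V^{\otimes(n+|\mathcal{A}\vee\mathcal{B}|-|\mathcal{A}\wedge\mathcal{B}|)}$.

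This identity is the main obstacle. The outer circles of $\mathcal{S}_{\alpha}(L)$ are precisely the cycles of the $2$-regular multigraph on $\{a_{1},b_{1},\ldots,a_{n},b_{n}\}$ obtained as the union of the two non-crossing matchings induced by the tangle smoothings on either side of $C$; the partitions $\mathcal{A},\mathcal{B}\in NC_{n}$ are, in turn, the cycle-index partitions obtained by closing each of these matchings with the canonical matching pairing each $a_{l}$ with $b_{l}$. The identity is therefore a purely combinatorial statement about pairs of non-crossing matchings on $2n$ points on a circle. I would prove it either by induction on $n$, peeling off an innermost arc of one of the matchings (or a singleton block of one of the partitions) and tracking how each of $|Outer_{\alpha}(L)|$, $|\mathcal{A}\vee\mathcal{B}|$ and $|\mathcal{A}\wedge\mathcal{B}|$ changes under the reduction, or by a ribbon-graph / Euler characteristic argument on $S^{2}$: overlaying the two surgeries produces a $4$-valent graph along $C$ admitting three natural smoothings whose component counts are linearly related, and after rewriting the resulting relation using standard lattice identities for non-crossing partitions one extracts the desired formula.
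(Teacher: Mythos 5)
Your proposal follows essentially the same route as the paper: decompose $\mathcal{C}(L)$ over pairs of states $(\alpha_{1},\alpha_{2})$ using the additivity of $r$ and $l^{\pm}$, separate inner from outer circles, identify $\mathcal{C}(L_{i})_{\mathcal{A}}$ with the inner-circle factors, and reduce everything to the count $|Outer_{\alpha}(L)|=n+|\mathcal{A}\vee\mathcal{B}|-|\mathcal{A}\wedge\mathcal{B}|$, which is precisely the identity the paper itself asserts (via its tubular-neighbourhood surgery picture) with no more justification than your sketch provides. The argument is correct, and your explicit description of the outer circles as cycles of the union of the two non-crossing matchings, with $\mathcal{A}$ and $\mathcal{B}$ obtained by closing each matching against the canonical one, if anything makes the key combinatorial step more transparent than the paper's.
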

\begin{proof}
Suppose that $\alpha=(\alpha_{1},\alpha_{2})$ is a state such that $\mathcal{C}_{1}(\alpha_{1})= \mathcal{A}$ and $\mathcal{C}_{2}(\alpha_{2})= \mathcal{B}$. Then, the number of circles in the smoothing $\mathcal{S}_{\alpha_{1}}(L_{1})$ that contain the intersection points is $|\mathcal{A}|$ and similarly, the number of circles in the smoothing $\mathcal{S}_{\alpha_{2}}(L_{2})$ that contain the intersection points is $|\mathcal{B}|$.

Consider a tubular neighborhood $C_{\varepsilon}$ of the admissible cut $C$ such that its boundary consists of two admissible cuts:
$$\partial C_{\varepsilon}= C_{1}\sqcup C_{2}$$
Consider $\varepsilon$ small enough such that:
\begin{itemize}
\item There is $\pi:C_{\varepsilon}\rightarrow C$ a vector bundle of $C$.
\item $\pi$ has a unique continuous extension to the boundary $\partial C_{\varepsilon}$.
\item $C_{\varepsilon}\cap L$ is a, necessarily finite, union of fibers of the vector bundle.
\end{itemize}
In particular, the intersection points:
$$a_{1}^{i},b_{1}^{i},a_{2}^{i},b_{2}^{i},\ldots a_{n}^{i},b_{n}^{i}$$
of $C_{i}$ and $L$ project via the extension of $\pi$ to the intersection points of $C$ and $L$. By definition of the $full$-surgery, modulo ambient isotopy, we get $\mathcal{S}_{\alpha_{1}}(L_{1})\sqcup \mathcal{S}_{\alpha_{2}}(L_{2})$ by the following surgery on $\mathcal{S}_{\alpha}(L)$: Substitute every $a_{j}^{1},b_{j}^{1},a_{j}^{2},b_{j}^{2}$ smoothing of $C_{\varepsilon}\cap L$ by the opposite Kauffman smoothing. Because $C_{\varepsilon}\cap L$ has no crossings, taking the smoothing neighborhoods and the parameter $\varepsilon$ small enough we have that $C_{\varepsilon}\cap L= C_{\varepsilon}\cap \mathcal{S}_{\alpha}(L)$.

In particular, the number of circles in the smoothing $\mathcal{S}_{\alpha}(L)$ containing the intersection points $a_{1},b_{1},\ldots a_{n},b_{n}$ is
$n+|\mathcal{A}\vee\mathcal{B}|-|\mathcal{A}\wedge\mathcal{B}|$. Because the number of circles in the respective smoothings not containing the intersection points is additive respect to the cut, we have the following formula:

$$k(\alpha)= \underbrace{k(\alpha_{1})-|\mathcal{A}|}_\text{1.} + \underbrace{k(\alpha_{2})- |\mathcal{B}|}_\text{2.}+ \underbrace{n+|\mathcal{A}\vee\mathcal{B}|-|\mathcal{A}\wedge\mathcal{B}|}_\text{3.}$$
such that:
\begin{enumerate}
\item = Number of circles in the smoothing $\mathcal{S}_{\alpha_{1}}(L_{1})$ not containing the intersection points. \\
\item = Number of circles in the smoothing $\mathcal{S}_{\alpha_{2}}(L_{2})$ not containing the intersection points. \\
\item = Number of circles in the smoothing $\mathcal{S}_{\alpha}(L)$ containing the intersection points.
\end{enumerate}

\noindent Because $r(\alpha)= r(\alpha_{1})+r(\alpha_{2})$ we have:

\begin{eqnarray*}
[\![L]\!] &\cong& \bigoplus_{\alpha\in\{0,1\}^{\chi} } V^{\otimes k(\alpha)}\{r(\alpha)\} \\
&\cong& \bigoplus_{\mathcal{A},\mathcal{B}\in NC_{n}} \bigoplus_{\substack{\alpha_{1}\in\{0,1\}^{\chi_{1}} \\ \mathcal{C}_{1}(\alpha_{1})= \mathcal{A}}}\bigoplus_{\substack{\alpha_{2}\in\{0,1\}^{\chi_{2}} \\ \mathcal{C}_{2}(\alpha_{2})= \mathcal{B}}} V^{\otimes(k(\alpha_{1})-|\mathcal{A}| + k(\alpha_{2})- |\mathcal{B}|+ n+|\mathcal{A}\vee\mathcal{B}|-|\mathcal{A}\wedge\mathcal{B}|)}\{r(\alpha_{1})+r(\alpha_{2})\} \\
&\cong& \bigoplus_{\mathcal{A},\mathcal{B}\in NC_{n}} [\![L_{1}]\!]_{\mathcal{A}}\otimes V^{\otimes(n+|\mathcal{A}\vee\mathcal{B}|-|\mathcal{A}\wedge\mathcal{B}|)}\otimes [\![L_{2}]\!]_{\mathcal{B}}
\end{eqnarray*}
Because of relation \ref{relationSigns}, after shifting degrees we have the result.
\end{proof}

\begin{cor}\label{Decomposition2}
We have the following isomorphism of bigraded vector spaces:
$$\mathcal{C}\left( L_{i}^{\mathcal{A}}\right)\cong \bigoplus_{\mathcal{B}\in NC_{n}} a^{\mathcal{A}\mathcal{B}}\otimes \mathcal{C}\left(L_{i}\right)_{\mathcal{B}}$$
such that:
$$a^{\mathcal{A}\mathcal{B}}:= V^{\otimes(n+|\mathcal{A}\vee\mathcal{B}|-|\mathcal{A}\wedge\mathcal{B}|)}$$
\end{cor}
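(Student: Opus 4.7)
The strategy is to deduce this corollary by applying Lemma~\ref{Decomposition1} not to the original link $L$, but to the surgery $L_1^{\mathcal{A}}$ itself, retaining the same admissible cut $C$, and then collapsing the resulting double sum via the Kronecker-delta property \eqref{remarkdelta}. I would carry out the argument for $i=1$ and invoke symmetry for $i=2$. The first preliminary I would dispose of is that $C$ remains an admissible cut of $L_1^{\mathcal{A}}$: since the surgery leaves the intersection points $a_1,b_1,\ldots,a_n,b_n$ and the link orientations at those points unchanged, the alternation condition is inherited.

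Next I would identify the two full-surgeries of $L_1^{\mathcal{A}}$ relative to $C$. On region $R_1$, the intersection $L_1^{\mathcal{A}}\cap R_1$ coincides with $L\cap R_1$, so the $1$-full-surgery $(L_1^{\mathcal{A}})_1$ recovers exactly $L_1$. On region $R_2$, the $2$-full-surgery $(L_1^{\mathcal{A}})_2$ consists only of the arcs $\varphi_2^{-1}(\vec{\mathcal{A}})$ and $\varphi_1^{-1}(\vec{full}^{op})$, hence is crossingless. Its connectivity partition $\mathcal{C}_2(*)$ can be read off by tracing the circuit $a_{i_1}\to b_{i_1}\to a_{i_2}\to b_{i_2}\to\ldots\to a_{i_k}\to a_{i_1}$ obtained by alternating $full$-arcs in $R_1$ with $\mathcal{A}$-arcs in $R_2$ within each class $\{i_1<\ldots<i_k\}$ of $\mathcal{A}$, and a direct check shows that this partition equals $\mathcal{A}$ itself.

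With these identifications in place, Lemma~\ref{Decomposition1} applied to $L_1^{\mathcal{A}}$ yields
$$\mathcal{C}(L_1^{\mathcal{A}}) \;\cong\; \bigoplus_{\mathcal{B}_1,\mathcal{B}_2\in NC_n} \mathcal{C}(L_1)_{\mathcal{B}_1}\otimes a^{\mathcal{B}_1\mathcal{B}_2}\otimes \mathcal{C}\bigl((L_1^{\mathcal{A}})_2\bigr)_{\mathcal{B}_2},$$
and by \eqref{remarkdelta} together with $\mathcal{C}_2(*)=\mathcal{A}$, the last tensor factor vanishes unless $\mathcal{B}_2=\mathcal{A}$, in which case it equals the base field. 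The double sum collapses to a single sum over $\mathcal{B}\in NC_n$, and the symmetry $a^{\mathcal{B}\mathcal{A}}=a^{\mathcal{A}\mathcal{B}}$ stemming from commutativity of $\vee$ and $\wedge$ delivers the claimed formula. I do not anticipate any serious obstacle: the main technical checks are the admissibility of $C$ for $L_1^{\mathcal{A}}$ and the identification of $\mathcal{C}_2(*)$ with $\mathcal{A}$, both essentially combinatorial. The only bookkeeping is the matching of shifts, but since $L_1^{\mathcal{A}}$ has the same crossings $\chi_1$ as $L_1$ while $(L_1^{\mathcal{A}})_2$ has none, one has $l^{\pm}(L_1^{\mathcal{A}})=l_1^{\pm}$ by \eqref{relationSigns}, so the Khovanov-level shift $[-l_1^-]\{l_1^+-2l_1^-\}$ agrees on both sides and the bigraded isomorphism upgrades to an isomorphism of Khovanov complexes.
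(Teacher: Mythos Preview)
Your proposal is correct and follows essentially the same route as the paper: apply Lemma~\ref{Decomposition1} to the surgery $L_{i}^{\mathcal{A}}$ with the same cut $C$, identify one full-surgery as $L_{i}$ and the other as the crossingless diagram $\varphi_{1}^{-1}(\overrightarrow{full}^{op})\sqcup\varphi_{2}^{-1}(\vec{\mathcal{A}})$ (resp.\ its counterpart), and invoke \eqref{remarkdelta} to collapse the double sum. Your explicit verification that $\mathcal{C}_{2}(*)=\mathcal{A}$ and that the degree shifts match is slightly more detailed than the paper's, but the argument is the same.
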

\begin{proof}
Recall the definition of the surgeries \eqref{surgeriesDef}. The curve $C$ is an admissible cut of the surgery $L_{i}^{\mathcal{A}}$ and by the previous Lemma we have:

$$\mathcal{C}\left( L_{i}^{\mathcal{A}}\right) \cong \bigoplus_{\mathcal{C},\mathcal{B}\in NC_{n}} \mathcal{C}\left( full_{i}^{\mathcal{A}} \right)_{\mathcal{C}}\otimes V^{\otimes(n+|\mathcal{C}\vee\mathcal{B}|-|\mathcal{C}\wedge\mathcal{B}|)}\otimes \mathcal{C}\left( L_{i} \right)_{\mathcal{B}}$$
such that:
$$full_{1}^{\mathcal{A}}= \varphi_{1}^{-1}(\overrightarrow{full}^{op})\sqcup\varphi_{2}^{-1}(\vec{\mathcal{A}})$$
$$full_{2}^{\mathcal{A}}= \varphi_{2}^{-1}(\overrightarrow{full})\sqcup\varphi_{1}^{-1}(\vec{\mathcal{A}}^{op})$$
Because the oriented links $full_{i}^{\mathcal{A}}$ have no crossings, by equation \eqref{remarkdelta} we have:
$$\mathcal{C}\left( full_{i}^{\mathcal{A}}\right)_{\mathcal{C}}=
\left\{
\begin{array}{c l}
 k		&\ \ \ \ \ \ if\ \  \mathcal{A}=\mathcal{C} \\
 (0)	&\ \ \ \ \ \ if\ \  \mathcal{A}\neq\mathcal{C}
\end{array}
\right.$$
and the result follows.
\end{proof}

In the Grothendieck ring of graded vector spaces, the determinant of the matrix $A:=\left(a^{\mathcal{A}\mathcal{B}}\right)$ in Lemma \ref{Decomposition1} and Corollary \ref{Decomposition2} is non singular for its diagonal is the higher degree term. As an immediate Corollary we have:

\begin{cor}\label{}
We have the following identity in the Grothendieck ring of the abelian category of bigraded vector spaces\ \footnote{We abuse of notation neither writing the forgetful functor on the respective complexes nor the equivalence classes.}:
\begin{eqnarray}
\mathcal{C}(L)\ \det(A) &=& \sum_{\mathcal{A},\mathcal{B}\in NC_{n}} \mathcal{C}\left( L_{1}^{\mathcal{A}}\right)\ A_{\mathcal{A}\mathcal{B}}\ \mathcal{C}\left( L_{2}^{\mathcal{B}}\right) \label{First_Groth_eq} \\
\mathcal{C}(L_{i})_{\mathcal{A}}\ \det(A) &=& \sum_{\mathcal{A},\mathcal{B}\in NC_{n}}\ A_{\mathcal{A}\mathcal{B}}\ \mathcal{C}\left( L_{i}^{\mathcal{B}}\right) \label{Second_Groth_eq}
\end{eqnarray}
where $A_{\mathcal{A}\mathcal{B}}$ is the $\mathcal{A}\mathcal{B}$-cofactor of the matrix $A:=(a^{\mathcal{A}\mathcal{B}})$ such that:
$$a^{\mathcal{A}\mathcal{B}}:= V^{\otimes(n+|\mathcal{A}\vee\mathcal{B}|-|\mathcal{A}\wedge\mathcal{B}|)}$$
\end{cor}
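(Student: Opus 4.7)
The plan is to recognize both identities as instances of Cramer's rule applied in the commutative Grothendieck ring $\mathcal{G}$ of bigraded vector spaces, with the system of equations provided by Corollary \ref{Decomposition2} as the input. I first note that the matrix $A=(a^{\mathcal{A}\mathcal{B}})$ is symmetric, since $|\mathcal{A}\vee\mathcal{B}|$ and $|\mathcal{A}\wedge\mathcal{B}|$ are symmetric in $\mathcal{A},\mathcal{B}$, so its cofactors also satisfy $A_{\mathcal{A}\mathcal{B}}=A_{\mathcal{B}\mathcal{A}}$. The hypothesis from the paragraph preceding the corollary — that the diagonal term of $\det(A)$ has strictly top degree, hence $\det(A)\neq 0$ in $\mathcal{G}$ — together with the fact that $\mathcal{G}$ is an integral domain (isomorphic to a Laurent polynomial ring under $\mathsf{q}\dim$) will let me cancel $\det(A)$ at the end.

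For the second identity \eqref{Second_Groth_eq}, I would read Corollary \ref{Decomposition2} as the vector equation
\[
\vec{w}_i=A\,\vec{v}_i \qquad \text{where}\quad \vec{w}_i=(\mathcal{C}(L_i^{\mathcal{A}}))_{\mathcal{A}\in NC_n},\ \vec{v}_i=(\mathcal{C}(L_i)_{\mathcal{A}})_{\mathcal{A}\in NC_n}.
\]
Left-multiplication by the adjugate matrix $\mathrm{adj}(A)$, whose $(\mathcal{A},\mathcal{B})$-entry is the cofactor $A_{\mathcal{B}\mathcal{A}}=A_{\mathcal{A}\mathcal{B}}$, gives $\mathrm{adj}(A)\,\vec{w}_i=\det(A)\,\vec{v}_i$, which upon reading off coordinates is exactly \eqref{Second_Groth_eq}. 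The proof here is purely formal commutative algebra; no knot-theoretic content is needed beyond what is already packaged in Corollary \ref{Decomposition2}.

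For the first identity \eqref{First_Groth_eq}, I would multiply the Grothendieck ring statement of Lemma \ref{Decomposition1} by $\det(A)^2$ and then substitute the identity just obtained into both tensor factors:
\[
\det(A)^{2}\,\mathcal{C}(L)=\sum_{\mathcal{C},\mathcal{D}}\bigl(\det(A)\,\mathcal{C}(L_1)_{\mathcal{C}}\bigr)\,a^{\mathcal{C}\mathcal{D}}\,\bigl(\det(A)\,\mathcal{C}(L_2)_{\mathcal{D}}\bigr).
\]
Substituting \eqref{Second_Groth_eq} on both sides produces a quadruple sum indexed by $\mathcal{A},\mathcal{B},\mathcal{C},\mathcal{D}$, and collapsing the inner sum via the adjugate identity $\sum_{\mathcal{C}}A_{\mathcal{A}\mathcal{C}}\,a^{\mathcal{C}\mathcal{D}}=\det(A)\,\delta_{\mathcal{A}\mathcal{D}}$ (and the symmetry of $A$) yields
\[
\det(A)^{2}\,\mathcal{C}(L)=\det(A)\sum_{\mathcal{A},\mathcal{B}}\mathcal{C}(L_1^{\mathcal{A}})\,A_{\mathcal{A}\mathcal{B}}\,\mathcal{C}(L_2^{\mathcal{B}}).
\]
Cancelling one factor of $\det(A)$ in the integral domain $\mathcal{G}$ gives \eqref{First_Groth_eq}.

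The only delicate point — and not really an obstacle — is the cancellation step. It relies on two observations: that $\mathcal{G}$ is commutative so matrix manipulations make sense, and that $\det(A)$ is a non-zero-divisor. Both follow from the Laurent-polynomial structure of $\mathsf{q}\dim$ together with the top-degree diagonal argument sketched in the paper. Everything else is bookkeeping of indices, identical to the proof of classical Cramer's rule, and does not require anything beyond the two structural results already proven.
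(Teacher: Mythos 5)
Your proposal is correct and is essentially the argument the paper intends: the paper offers no written proof, declaring the corollary ``immediate'' from Lemma \ref{Decomposition1}, Corollary \ref{Decomposition2}, and the invertibility of $A$ via the top-degree-diagonal observation, and your adjugate/Cramer computation is exactly the standard way to make that precise (the Grothendieck ring of finite-dimensional bigraded vector spaces is indeed a two-variable Laurent polynomial ring, hence a domain, so the cancellation of $\det(A)$ is legitimate). As a minor remark, the cancellation can be avoided altogether for \eqref{First_Groth_eq} by substituting Corollary \ref{Decomposition2} directly into the right-hand side $\sum_{\mathcal{A},\mathcal{B}}\mathcal{C}(L_{1}^{\mathcal{A}})\,A_{\mathcal{A}\mathcal{B}}\,\mathcal{C}(L_{2}^{\mathcal{B}})$ and collapsing with $\sum_{\mathcal{A}}a^{\mathcal{A}\mathcal{C}}A_{\mathcal{A}\mathcal{B}}=\det(A)\,\delta_{\mathcal{C}\mathcal{B}}$, but your version is equally valid.
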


In particular, by formula \eqref{Second_Groth_eq} the bigraded vector spaces $\mathcal{C}(L_{i})_{\mathcal{A}}$ can be calculated entirely from the Khovanov complexes of the surgeries $L_{i}^{\mathcal{A}}$.

\begin{defi}\label{Double_Khovanov_complex}
Given an admissible cut $C$ of an oriented link diagram $L$, consider the Khovanov complexes of the surgeries:
$$\mathcal{C}\left( L_{i}^{\mathcal{A}}\right)=\left(\mathcal{C}\left( L_{i}^{\mathcal{A}}\right)^{r},\ d_{i}^{\mathcal{A},r}\right)$$
such that a compatible order respect to the cut was taken on the smoothings of the surgeries (See section \ref{Order_Compatible}).
We define the Khovanov double complex $\mathcal{CC}(L;C)$ as follows:
\begin{equation}\label{Double_Complex_spaces}
\mathcal{CC}(L;C)^{s,t}:= \bigoplus_{\mathcal{A}\in NC_{n}} \mathcal{C}(L_{1})_{\mathcal{A}}^{s}\otimes \mathcal{C}\left( L_{2}^{\mathcal{A}} \right)^{t}
\cong \bigoplus_{\mathcal{A}\in NC_{n}} \mathcal{C}\left( L_{1}^{\mathcal{A}}\right)^{s}\otimes \mathcal{C}(L_{2})_{\mathcal{A}}^{t}
\end{equation}
and the vertical and horizontals differentials are given by:
\begin{eqnarray}\label{Double_Complex_morphisms}
d^{s,t}_{vert} &:=& \bigoplus_{\mathcal{A}\in NC_{n}} id_{1,\mathcal{A}}^{\ s}\otimes d_{2}^{\mathcal{A},t} \\
d^{s,t}_{hor} &:=& \bigoplus_{\mathcal{A}\in NC_{n}} d_{1}^{\mathcal{A},s}\otimes id_{2,\mathcal{A}}^{\ t}
\end{eqnarray}
\end{defi}

The isomorphism in \ref{Double_Complex_spaces} is an immediate consequence of Lemma \ref{Decomposition1} and Corollary \ref{Decomposition2}. Is clear from the definition that $d_{vert}$ and $d_{hor}$ are differentials. To prove that these differentials commute and $\mathcal{CC}(L;C)$ is in fact a double complex; i.e. $d_{vert}\circ d_{hor}= d_{vert}\circ d_{vert}$, it is enough to prove that the total complex of $\mathcal{CC}(L;C)$ is actually a chain complex \footnote{This would be false if a compatible order respect to the cut were not taken in definition \ref{Double_Khovanov_complex}.}. This is the content of the next Proposition:

\begin{prop}\label{Splitting_Khovanov_complex}
Given an admissible cut $C$ of an oriented link diagram $L$, the total complex\ \footnote{Consider a double complex $A= (A^{p,q},\ d^{p,q}_{hor},\ d^{p,q}_{vert})$. The total complex $Tot(A)$ is the complex such that $Tot(A)^{k}= \bigoplus_{p+q=k}A^{p,q}$ and: $$d^{k}= \bigoplus_{p+q=k} \left( d^{p,q}_{hor} +(-1)^{p}  d^{p,q}_{vert}\right)$$} of the Khovanov double complex is isomorphic to the Khovanov complex:
$$\mathcal{C}(L)\cong Tot\left(\mathcal{CC}(L;C)\right)$$
such that the Khovanov complex is constructed with the order inherited from the compatible order respect to the cut taken in the construction of the double complex (See section \ref{Order_Compatible} and definition \ref{Double_Khovanov_complex}).
\end{prop}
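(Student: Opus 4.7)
The plan is to identify the two complexes summand by summand via the decompositions in Lemma \ref{Decomposition1} and Corollary \ref{Decomposition2}, and then to verify that the Khovanov differential on $L$ splits, after this identification, into the horizontal and vertical differentials of the double complex. At the level of bigraded vector spaces the isomorphism is already furnished: combining \eqref{Double_Complex_spaces} with Corollary \ref{Decomposition2} and collecting by total degree yields the common model
\[
\bigoplus_{s+t=k}\ \bigoplus_{\mathcal{A},\mathcal{B}\in NC_n} \mathcal{C}(L_1)_{\mathcal{A}}^{s}\otimes a^{\mathcal{A}\mathcal{B}}\otimes \mathcal{C}(L_2)_{\mathcal{B}}^{t},
\]
which by Lemma \ref{Decomposition1} is $\mathcal{C}(L)^{k}$. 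At each state $\alpha=(\alpha_1,\alpha_2)$ with $\mathcal{A}=\mathcal{C}_1(\alpha_1)$ and $\mathcal{B}=\mathcal{C}_2(\alpha_2)$, Proposition \ref{Bijection_Natural} and Corollary \ref{Bijection_Natural_IV} identify $F(\mathcal{S}_\alpha(L))$ with the tensor product of $F$-factors indexed by $Inner_{\alpha_1}(L_1)$, $Outer_\alpha(L)$ and $Inner_{\alpha_2}(L_2)$ taken in that order, and the inherited compatible order of Section \ref{Order_Compatible} is precisely what makes this ordering agree simultaneously with the one induced on $\mathcal{C}(L_1^{\mathcal{A}})\otimes \mathcal{C}(L_2)_{\mathcal{A}}$ and on $\mathcal{C}(L_1)_{\mathcal{A}}\otimes \mathcal{C}(L_2^{\mathcal{A}})$, so no reordering of tensor factors is required.

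The next step is to decompose the Khovanov differential according to which block it acts on. Fix a total order on $\chi$ in which every crossing of $\chi_1$ precedes every crossing of $\chi_2$, and take the smoothing disk of each crossing $c$ inside whichever region $R_i$ contains $c$. Then an arrow $a$ with $j(*,a)\in\chi_1$ has a cobordism supported in $R_1$, so by naturality of the bijection in Proposition \ref{Bijection_Natural} its Frobenius morphism acts only on the $F(Inner_{\alpha_1}(L_1))\otimes F(Outer_\alpha(L))$ part of the ordered tensor product, leaving $F(Inner_{\alpha_2}(L_2))$ untouched. Translated through Corollary \ref{Decomposition2}, this is exactly a summand of $d_1^{\mathcal{A},s}\otimes id$, which is what defines $d_{hor}$; the symmetric argument for arrows with $j(*,a)\in\chi_2$ produces $id\otimes d_2^{\mathcal{A},t}$ and hence $d_{vert}$.

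The main obstacle is the sign bookkeeping. With the chosen crossing order, arrows with $j(*,a)\in\chi_1$ satisfy $sg(a)=sg(a_1)$ because every $j<j(*,a)$ already lies in $\chi_1$, matching the horizontal sign in $\mathcal{C}(L_1^{\mathcal{A}})$ verbatim. Arrows with $j(*,a)\in\chi_2$ instead give $sg(a)=r(\alpha_1)+sg(a_2)=s+l_1^-+sg(a_2)$, which differs from the total-complex prescription $(-1)^{s}(-1)^{sg(a_2)}$ only by the constant factor $(-1)^{l_1^-}$. This discrepancy is absorbed by composing the natural identification with the diagonal rescaling of $Tot(\mathcal{CC}(L;C))$ that multiplies its $(s,t)$-summand by $(-1)^{l_1^-\cdot t}$; one checks directly that this is the unique correction, up to a global sign, compatible with both the horizontal and the vertical sign comparisons.

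Once these compatibilities are verified, the isomorphism of chain complexes $\mathcal{C}(L)\cong Tot(\mathcal{CC}(L;C))$ is established. As a by-product, transporting $d_{\mathcal{C}(L)}^{2}=0$ through this isomorphism simultaneously yields $d_{hor}^{2}=0$, $d_{vert}^{2}=0$ and $d_{hor}\circ d_{vert}=d_{vert}\circ d_{hor}$, certifying after the fact that $\mathcal{CC}(L;C)$ is indeed a double complex, as asserted in the paragraph preceding the statement.
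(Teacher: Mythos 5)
Your proposal is correct and follows essentially the same route as the paper's proof: split the Khovanov differential into arrows supported in $\chi_1$ versus $\chi_2$, use the naturality of the circle bijections together with the compatible order to identify each piece with $d_1^{\mathcal{A}}\otimes id$ or $id\otimes d_2^{\mathcal{A}}$, and compute $sg(a)=r(\alpha_1)+sg(a_2)$ to produce the Koszul sign $(-1)^{r(\alpha_1)}$ of the total complex. The only difference is cosmetic: the paper carries out the identification on the unshifted complexes $[\![\,\cdot\,]\!]$ and applies the degree shift at the very end, whereas you work with the shifted complexes and absorb the resulting constant $(-1)^{l_1^-}$ by a diagonal rescaling, which is a slightly more explicit treatment of the same sign bookkeeping.
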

\begin{proof}
We will follow the notation and definitions in the preliminary section \ref{Khovanov_Nutshell}. Consider a morphism of states $\alpha\in \{0,1,*\}^{\chi}$ such that $*$ only appears once:
$$|\{i\ such\ that\ \alpha(i)=*\}|=1$$
Consider the canonical decomposition $\alpha= (\alpha_{1},\alpha_{2})$. Because $*$ only appears once, it must be in $\alpha_{1}$ or in $\alpha_{2}$ but not both. We will say that $\alpha$ is a \textit{left} morphism if $*$ is in $\alpha_{1}$ and a \textit{right} morphism otherwise.

Suppose without loss of generality that $\alpha$ is a right morphism; i.e. $*$ is in $\alpha_{2}$. Consider the smoothings $\mathcal{S}_{s(\alpha)}(L)$ and $\mathcal{S}_{t(\alpha)}(L)$. Because $s(\alpha)=(\alpha_{1},s(\alpha_{2}))$ and $t(\alpha)=(\alpha_{1},t(\alpha_{2}))$, the circles not containing the intersection points in the smoothing $\mathcal{S}_{\alpha_{1}}(L_{1})$ remain the same; i.e. they are common to both smoothings $\mathcal{S}_{s(\alpha)}(L)$ and $\mathcal{S}_{t(\alpha)}(L)$. The set of these circles will be denoted $Inner_{\alpha_{1}}(L_{1})$. The set of circles containing the intersection points in the smoothing $\mathcal{S}_{\alpha_{1}}(L_{1})$ will be denoted by $Outer_{\alpha_{1}}(L_{1})$. See that $|Outer_{\alpha_{1}}(L_{1})|= |\mathcal{C}_{1}(\alpha_{1})|$ hence:

$$|Inner_{\alpha_{1}}(L_{1})|= k(\alpha_{1})-|\mathcal{C}_{1}(\alpha_{1})|$$

By Proposition \ref{Bijection_Natural}, there are natural one to one correspondences:
$$\mathcal{S}_{s(\alpha)}(L)\xrightarrow{\ \ \ 1:1\ \ \ } Inner_{\alpha_{1}}(L_{1}) \sqcup \mathcal{S}_{s(\alpha_{2})}(L_{2}^{\mathcal{C}_{1}(\alpha_{1})})$$
$$\mathcal{S}_{t(\alpha)}(L)\xrightarrow{\ \ \ 1:1\ \ \ } Inner_{\alpha_{1}}(L_{1}) \sqcup \mathcal{S}_{t(\alpha_{2})}(L_{2}^{\mathcal{C}_{1}(\alpha_{1})})$$
In particular, because we have taken compatible orderings in the smoothings, we have the following identity:
$$d_{\alpha}= id^{\otimes(k(\alpha_{1})-|\mathcal{C}_{1}(\alpha_{1})|)}\otimes d_{\alpha_{2}}^{\mathcal{C}_{1}(\alpha_{1})}$$
where $d_{\alpha}$ and $d_{\alpha_{2}}^{\mathcal{C}_{1}(\alpha_{1})}$ are the categorifications of the morphisms $\alpha$ and $\alpha_{2}$ in the category of smoothings of $L$ and $L_{2}^{\mathcal{C}_{1}(\alpha_{1})}$ respectively; i.e. $d_{\alpha}= F(\alpha)$ and $d_{\alpha_{2}}^{\mathcal{C}_{1}(\alpha_{1})}= F(\alpha_{2})$ where $F$ is the topological quantum field theory functor defined in section \ref{Khovanov_Nutshell}.

Because of the following relation:
$$sg(\alpha)= \sum_{i<j(*,\alpha)}\alpha_{i}= r(\alpha_{1})+ \sum_{i<j(*,\alpha_{2})}\alpha_{2,i}= r(\alpha_{1})+ sg(\alpha_{2})$$
we have:
$$(-1)^{sg(\alpha)}\ d_{\alpha}= (-1)^{r(\alpha_{1})}\ id^{\otimes (k(\alpha_{1})-|\mathcal{C}_{1}(\alpha_{1})|)}\otimes (-1)^{sg(\alpha_{2})}\ d_{\alpha_{2}}^{\mathcal{C}_{1}(\alpha_{1})}$$

Summing over the set of all right morphisms $\alpha$, we get the following expression:

\begin{eqnarray*}
d^{right,\ k} &=& \bigoplus_{\substack{ \alpha \textsl{is\ a\ right\ morphism}\\ r(\alpha(0))=k}} (-1)^{sg(\alpha)}\ d_{\alpha} =
\bigoplus_{k=i+j}\ \ \ \bigoplus_{\substack{\alpha_{1} \\ r(\alpha_{1})=i }}\ \ \ \bigoplus_{\substack{\alpha_{2} \\ r(\alpha_{2}(0))=j }} (-1)^{sg(\alpha)}\ d_{\alpha} \\
&=&\bigoplus_{k=i+j}\bigoplus_{\substack{\alpha_{1} \\ r(\alpha_{1})=i }}\ \left((-1)^{i}\ id^{\otimes (k(\alpha_{1})-|\mathcal{C}_{1}(\alpha_{1})|)}\otimes \left(\bigoplus_{\substack{\alpha_{2} \\ r(\alpha_{2}(0))=j }}\ (-1)^{sg(\alpha_{2})}\ d_{\alpha_{2}}^{\mathcal{C}_{1}(\alpha_{1})}\right)\right) \\
&=&\bigoplus_{\mathcal{A}}\bigoplus_{k=i+j}\bigoplus_{\substack{ \alpha_{1} \\ r(\alpha_{1})=i \\ \mathcal{C}_{1}(\alpha_{1})=\mathcal{A}}}\ \left((-1)^{i}\ id^{\otimes (k(\alpha_{1})-|\mathcal{A}|)}\otimes \left(\bigoplus_{{\substack{\alpha_{2} \\ r(\alpha_{2}(0))=j }}}\ (-1)^{sg(\alpha_{2})}\ d_{\alpha_{2}}^{\mathcal{A}}\right)\right) \\
&=&\bigoplus_{\mathcal{A}}\bigoplus_{k=i+j}\ (-1)^{i}\left(\bigoplus_{\substack{ \alpha_{1} \\ r(\alpha_{1})=i \\ \mathcal{C}_{1}(\alpha_{1})=\mathcal{A}}}\ id^{\otimes (k(\alpha_{1})-|\mathcal{A}|)}\right)\otimes \left(\bigoplus_{{\substack{\alpha_{2} \\ r(\alpha_{2}(0))=j }}}\ (-1)^{sg(\alpha_{2})}\ d_{\alpha_{2}}^{\mathcal{A}}\right) \\
&=&\bigoplus_{\mathcal{A}}\bigoplus_{k=i+j}\ (-1)^{i}\ id_{1,\mathcal{A}}^{\ i}\otimes d_{2}^{\mathcal{A},j}
\end{eqnarray*}

We have proved the following isomorphism of chain complexes \footnote{Recall that given chain complexes $(A^{\bullet},f^{\bullet})$ and $(B^{\bullet},g^{\bullet})$, the $k$-th morphism of the chain complex $A\otimes B$ is: $$h^{k}=\bigoplus_{i+j=k} \left(f^{i}\otimes id + (-1)^{i} id\otimes g^{j}\right)$$} \footnote{In the formula, we think of the bigraded vector space $[\![L_{1}]\!]_{\mathcal{A}}$ as a chain complex of graded vector spaces with zero morphisms.}:
$$[\![L]\!]^{right}\cong \bigoplus_{\mathcal{A}\in NC_{n}} [\![L_{1}]\!]_{\mathcal{A}}\otimes [\![L_{2}^{\mathcal{A}}]\!]$$
where $[\![L]\!]^{right}$ denotes the chain complex $[\![L]\!]$ without the categorification of the left morphisms.
The formula for $[\![L]\!]^{left}$ follows verbatim and after shifting degrees we have the result.
\end{proof}

As in the Khovanov complex, different compatible orders respect to the cut give homotopic double chain complexes. By Theorem \ref{Khovanov_Homotopy}, we have the following Corollary:

\begin{cor}
Given an admissible cut $C$ of an oriented link diagram $L$, the total complex of the Khovanov double complex is homotopic to the Khovanov complex:
$$\mathcal{C}(L)\simeq Tot\left(\mathcal{CC}(L;C)\right)$$
\end{cor}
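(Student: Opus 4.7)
The plan is to reduce the corollary to the combination of the strict isomorphism already established in Proposition \ref{Splitting_Khovanov_complex} and the homotopy invariance of the Khovanov complex under reordering, which is Theorem \ref{Khovanov_Homotopy}. The corollary only replaces the word ``isomorphic'' in Proposition \ref{Splitting_Khovanov_complex} by ``homotopic'', and removes the dependence on a particular compatible order respect to the cut.

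First I would fix a compatible order $\omega$ respect to the cut, and let $\omega'$ denote the induced order on the smoothings $\mathcal{S}_\alpha(L)$ described at the end of section \ref{Order_Compatible}. Writing $\mathcal{C}(L;\omega')$ for the Khovanov complex of $L$ built using $\omega'$, Proposition \ref{Splitting_Khovanov_complex} provides a strict isomorphism of chain complexes
$$\mathcal{C}(L;\omega')\ \cong\ Tot\bigl(\mathcal{CC}(L;C)\bigr).$$
Next, I would invoke Theorem \ref{Khovanov_Homotopy}: any two orderings of the crossings and of the smoothings yield homotopy equivalent Khovanov complexes, so $\mathcal{C}(L)$ constructed with the ``standard'' choice of ordering is homotopic to $\mathcal{C}(L;\omega')$. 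Composing these two facts gives the desired homotopy equivalence $\mathcal{C}(L)\simeq Tot(\mathcal{CC}(L;C))$.

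The only point that requires care is the preamble to the corollary, where it is asserted that different compatible orders respect to the cut produce homotopic double complexes, so that the right-hand side is well defined up to homotopy. The plan here is the same argument read in reverse: given two compatible orders $\omega_1$ and $\omega_2$, Proposition \ref{Splitting_Khovanov_complex} identifies $Tot(\mathcal{CC}(L;C,\omega_i))$ with $\mathcal{C}(L;\omega'_i)$, and Theorem \ref{Khovanov_Homotopy} furnishes a homotopy equivalence between the latter two complexes, which transports back to a homotopy equivalence of the two total complexes.

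The main potential obstacle is purely bookkeeping: one must make sure that the homotopy equivalences produced by Theorem \ref{Khovanov_Homotopy} are genuinely available for the particular type of reordering inherited from the cut, namely reorderings that separate $Inner$ and $Outer$ circles according to the compatible convention. Since Theorem \ref{Khovanov_Homotopy} covers \emph{any} change of ordering on crossings and smoothings, this reduction is harmless, and no new homotopy needs to be constructed by hand.
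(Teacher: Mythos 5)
Your argument is exactly the paper's: combine the strict isomorphism of Proposition \ref{Splitting_Khovanov_complex} (for a fixed compatible order and its inherited order on $\mathcal{S}_{\alpha}(L)$) with the order-independence up to homotopy guaranteed by Theorem \ref{Khovanov_Homotopy}, and use the same two facts in reverse to see that different compatible orders give homotopic double complexes. This matches the paper's (very brief) justification, so nothing further is needed.
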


We summarize our results in the following Theorem:

\begin{teo}
Given an admissible cut $C$ of an oriented link diagram $L$, there is a double chain complex whose rows and columns are linear combinations of the Khovanov complexes of the surgeries $L_{1}^{\mathcal{A}}$ and $L_{2}^{\mathcal{B}}$ respectively in the Grothendieck ring of chain complexes such that its total complex is homotopic to the Khovanov complex of $L$.
\end{teo}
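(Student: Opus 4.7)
The plan is to assemble the theorem by stitching together the pieces already established in the preceding section, so there is almost nothing new to prove beyond packaging.

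First, I would take the Khovanov double complex $\mathcal{CC}(L;C)$ from Definition \ref{Double_Khovanov_complex} as the candidate double chain complex. By the very definition
\begin{equation*}
\mathcal{CC}(L;C)^{s,t} = \bigoplus_{\mathcal{A}\in NC_{n}} \mathcal{C}(L_{1})_{\mathcal{A}}^{s}\otimes \mathcal{C}\left(L_{2}^{\mathcal{A}}\right)^{t} \cong \bigoplus_{\mathcal{A}\in NC_{n}} \mathcal{C}\left(L_{1}^{\mathcal{A}}\right)^{s}\otimes \mathcal{C}(L_{2})_{\mathcal{A}}^{t},
\end{equation*}
so the $t$-th row is, up to tensoring with the fixed bigraded vector spaces $\mathcal{C}(L_{2}^{\mathcal{A}})^{t}$, a direct sum over $\mathcal{A}$ of the chain complexes $\mathcal{C}(L_{1})_{\mathcal{A}}$; symmetrically for columns with the roles of $1$ and $2$ swapped. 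The horizontal and vertical differentials are those inherited from the Khovanov differentials $d_{1}^{\mathcal{A},s}$ and $d_{2}^{\mathcal{A},t}$ of the surgeries, so the rows and columns are literally built out of the Khovanov complexes of the surgeries.

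Second, I would invoke equation \eqref{Second_Groth_eq}, namely
\begin{equation*}
\mathcal{C}(L_{i})_{\mathcal{A}}\,\det(A) = \sum_{\mathcal{B}\in NC_{n}} A_{\mathcal{A}\mathcal{B}}\,\mathcal{C}\left(L_{i}^{\mathcal{B}}\right),
\end{equation*}
which rewrites each $\mathcal{C}(L_{i})_{\mathcal{A}}$ as an explicit linear combination, in the Grothendieck ring of chain complexes, of the Khovanov complexes $\mathcal{C}(L_{i}^{\mathcal{B}})$ with coefficients the cofactors $A_{\mathcal{A}\mathcal{B}}$ of the non-singular matrix $A=(a^{\mathcal{A}\mathcal{B}})$. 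Substituting this expression into the row (respectively column) decomposition of $\mathcal{CC}(L;C)$ yields precisely the claim that the rows and columns of the double complex are linear combinations of $\{\mathcal{C}(L_{1}^{\mathcal{A}})\}_{\mathcal{A}}$ and $\{\mathcal{C}(L_{2}^{\mathcal{B}})\}_{\mathcal{B}}$ in the Grothendieck ring.

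Finally, for the homotopy statement I would cite Proposition \ref{Splitting_Khovanov_complex}, which gives the isomorphism $\mathcal{C}(L)\cong \mathrm{Tot}(\mathcal{CC}(L;C))$ for any fixed compatible order, together with the subsequent Corollary that upgrades this to a homotopy equivalence independent of the compatible order chosen; this passes to a homotopy equivalence at the level of total complexes via Theorem \ref{Khovanov_Homotopy}. The only subtle point is verifying that the inversion of $\det(A)$ takes place in the Grothendieck ring rather than at the cochain level, but this is exactly the content of the corollary preceding Definition \ref{Double_Khovanov_complex}, and no additional argument is required. Thus the theorem is essentially a restatement of Proposition \ref{Splitting_Khovanov_complex} combined with Corollary \ref{Decomposition2}, and the main conceptual obstacle is simply ensuring the Grothendieck-ring language is used consistently when expressing rows and columns through the cofactor formula.
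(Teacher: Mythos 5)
Your proposal is correct and matches the paper's approach exactly: the theorem is stated there as a summary of Definition \ref{Double_Khovanov_complex}, equation \eqref{Second_Groth_eq}, Proposition \ref{Splitting_Khovanov_complex}, and the ensuing homotopy corollary, which is precisely the assembly you carry out. No further argument is needed.
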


Notice that in the case where $n=0$, the double Khovanov complex is just the tensor product double complex and by Proposition \ref{Splitting_Khovanov_complex} we have the Khovanov complex factorization of the disjoint union (\cite{Khovanov_original}, section 7.4):
$$\mathcal{C}(L_{1}\sqcup L_{2})\cong Tot\left(\mathcal{CC}(L_{1}\sqcup L_{2};C)\right) \cong \mathcal{C}(L_{1})\otimes \mathcal{C}(L_{2})$$
In particular, from the K\"unneth formula in characteristic zero, we have the factorization of the Khovanov homology of the disjoint union:
$$Kh(L_{1}\sqcup L_{2})\cong Kh(L_{1})\otimes Kh(L_{2})$$

\begin{remark}
In the $n=1$ case, resolving the double point, the double Khovanov complexes of the disjoint union, connected sum and double point diagrams can be arranged in a short exact sequence as the one in expression $(169)$, section 7.4 in Khovanov's original paper \cite{Khovanov_original}. Evaluating the abelian groups, the subsequent long homology exact sequence involves the Khovanov homology of the disjoint union and connected sum of the tangles only.
\end{remark}

For the general case, in characteristic zero we have the following spectral sequence:

\begin{teo}
Given an admissible cut $C$ of an oriented link diagram $L$, there is a spectral sequence of graded vector spaces converging to the Khovanov homology $Kh_{\Q}(L):= H(\mathcal{C}(L),\Q)$:
$$E^{2}_{p,q}\Rightarrow Kh_{\Q}(L)$$
such that:
\begin{equation}\label{spectral_sequence}
E^{2}_{p,q}\cong H^{p}\left(\bigoplus_{\mathcal{A}\in NC_{n}} \mathcal{C}(L_{1})_{\mathcal{A}}^{\bullet}\otimes Kh_{\Q}^{q}(L_{2}^{\mathcal{A}}),\ \bigoplus_{\mathcal{A}\in NC_{n}} \left[ d_{1}^{\mathcal{A},\bullet}\otimes id_{2,\mathcal{A}}^{\ q}\right]\right)
\end{equation}
\end{teo}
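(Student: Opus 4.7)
The plan is to apply the standard spectral sequence of a bounded double complex to $\mathcal{CC}(L;C)$ and identify its abutment using the previous Corollary, which gives $\mathcal{C}(L)\simeq \mathrm{Tot}\left(\mathcal{CC}(L;C)\right)$. Since $L$ has only finitely many crossings, the sets $\chi_1,\chi_2$ are finite and $\mathcal{CC}(L;C)^{s,t}$ vanishes outside a bounded rectangle, so both the row and column filtrations of the total complex yield convergent spectral sequences with limit $H^{\bullet}\!\left(\mathrm{Tot}(\mathcal{CC}(L;C))\right)\cong Kh_{\Q}(L)$.

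First I would filter by columns, so that $E_0^{s,t}=\mathcal{CC}(L;C)^{s,t}$ with differential $d_0=d_{vert}$. By Definition \ref{Double_Khovanov_complex} this differential acts on the second tensor factor:
$$d_{vert}^{s,t}=\bigoplus_{\mathcal{A}\in NC_n} id_{1,\mathcal{A}}^{\ s}\otimes d_{2}^{\mathcal{A},t}.$$
Because we work over $\Q$, every $\mathcal{C}(L_1)_{\mathcal{A}}^{s}$ is a free $\Q$-module, so tensoring with it is exact and cohomology commutes with the external tensor product summand by summand. Applied in each index $\mathcal{A}\in NC_n$, this produces
$$E_1^{s,t}\cong \bigoplus_{\mathcal{A}\in NC_n} \mathcal{C}(L_1)_{\mathcal{A}}^{s}\otimes Kh_{\Q}^{t}\!\left(L_{2}^{\mathcal{A}}\right).$$

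Next I would identify the induced differential $d_1$ on $E_1$. Since $d_{hor}$ and $d_{vert}$ act on different tensor factors, the horizontal differential commutes with $d_{vert}$ summand-wise and therefore descends to $E_1$ as the tensor product of the original $d_1^{\mathcal{A},s}$ on the first factor with the identity on the $Kh_{\Q}^{t}$ factor; naturality of K\"unneth here is immediate because $d_1^{\mathcal{A},s}\otimes id_{2,\mathcal{A}}^{\ t}$ already has the required form of a chain map tensored with a morphism of chain complexes. Consequently
$$E_2^{s,t}\cong H^{s}\!\left(\bigoplus_{\mathcal{A}\in NC_n} \mathcal{C}(L_1)_{\mathcal{A}}^{\bullet}\otimes Kh_{\Q}^{t}(L_{2}^{\mathcal{A}}),\ \bigoplus_{\mathcal{A}\in NC_n} \left[ d_{1}^{\mathcal{A},\bullet}\otimes id_{2,\mathcal{A}}^{\ t}\right]\right),$$
which is the stated formula.

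The two steps that require care are: (i)~verifying that the K\"unneth isomorphism really holds termwise, which is routine over a field; and (ii)~checking that under this termwise identification the horizontal differential descends to the formula claimed, which amounts to naturality of the K\"unneth map with respect to chain maps of the first factor. Both are standard once one has a bounded double complex. The genuine obstacle is not in producing the spectral sequence but in knowing that $\mathcal{CC}(L;C)$ is indeed a double complex whose total complex computes Khovanov homology; this is precisely what Proposition \ref{Splitting_Khovanov_complex} and its Corollary, based on the compatible ordering of section \ref{Order_Compatible}, have already established. Given that input, the spectral sequence and the identification of $E_2$ follow immediately from the standard homological algebra of bounded first-quadrant type double complexes.
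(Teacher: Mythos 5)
Your proposal is correct and follows essentially the same route as the paper: both take the spectral sequence of the first-quadrant (bounded) double complex $\mathcal{CC}(L;C)$ with the filtration for which the page-zero differential is $d_{vert}$, use field coefficients to commute cohomology past the tensor factor $\mathcal{C}(L_{1})_{\mathcal{A}}^{s}$ on the $E_{1}$ page, and identify $d_{1}$ as the induced horizontal differential $\left[ d_{1}^{\mathcal{A},\bullet}\otimes id_{2,\mathcal{A}}^{\ q}\right]$. The only cosmetic difference is that you spell out convergence via boundedness and the naturality of the K\"unneth identification, which the paper leaves implicit.
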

\begin{proof}
To simplify the notation, we will omit the symbol $\bigoplus_{\mathcal{A}\in NC_{n}}$ every time a super and a subindex repeat\footnote{This is the Einstein convention on repeated covariant and contravariant indexes.}. Consider the double complex of Proposition \ref{Splitting_Khovanov_complex}. Because this double complex is concentrated in the first quadrant, its spectral sequence converges to the cohomology of its total complex. Considering the horizontal filtration of the double complex, we have for the first page:
$$E^{1}_{p,q}:= H^{q}\left(\mathcal{C}(L_{1})_{\mathcal{A}}^{p}\otimes \mathcal{C}\left( L_{2}^{\mathcal{A}}\right)^{\bullet},\ id_{2,\mathcal{A}}^{\ p}\otimes d_{2}^{\mathcal{A},\bullet}\right)$$
Because we are taking cohomology in zero characteristic field $\Q$ coefficients, we actually have:
$$E^{1}_{p,q}\cong \mathcal{C}(L_{1})_{\mathcal{A}}^{p}\otimes H^{q}\left(\mathcal{C}\left( L_{2}^{\mathcal{A}}\right)^{\bullet},\ d_{2}^{\mathcal{A},\bullet}\right)= \mathcal{C}(L_{1})_{\mathcal{A}}^{p}\otimes H^{q}\left(\mathcal{C}\left( L_{2}^{\mathcal{A}}\right)\right)$$
We have the morphism of chain complexes \footnote{Watch out the super and sub indexes between the space and the morphism, they don't match.}:
$$\xymatrix{
    \mathcal{C}(L_{1})_{\mathcal{A}}^{p}\otimes \mathcal{C}\left( L_{2}^{\mathcal{A}}\right)^{\bullet}  \ar[rrr]^{d_{1}^{\mathcal{A},p}\otimes id_{2,\mathcal{A}}^{\bullet}} & & &	\mathcal{C}(L_{1})_{\mathcal{A}}^{p+1}\otimes \mathcal{C}\left( L_{2}^{\mathcal{A}}\right)^{\bullet} }$$
The cohomology of the morphism is just the restriction at the level of representatives and we will denote it as follows:
$$\xymatrix{
    \mathcal{C}(L_{1})_{\mathcal{A}}^{p}\otimes H^{q}\left(\mathcal{C}\left( L_{2}^{\mathcal{A}}\right)\right)  \ar[rrr]^{\left[ d_{1}^{\mathcal{A},p}\otimes id_{2,\mathcal{A}}^{\ q}\right]} & & &	\mathcal{C}(L_{1})_{\mathcal{A}}^{p+1}\otimes H^{q}\left(\mathcal{C}\left( L_{2}^{\mathcal{A}}\right)\right) }$$
Because $d_{1}^{\mathcal{A}}$ is a differential, the above complex is a chain complex respect to $p$. Taking the cohomology respect to $p$ we have the second page:
$$E^{2}_{p,q}\cong H^{p}\left(\mathcal{C}(L_{1})_{\mathcal{A}}^{\bullet}\otimes H^{q}\left(\mathcal{C}\left( L_{2}^{\mathcal{A}}\right)\right),\ \left[ d_{1}^{\mathcal{A},\bullet}\otimes id_{2,\mathcal{A}}^{\ q}\right]\right)$$
The proof is complete.
\end{proof}

\begin{cor}\label{Thevenin}
Given an admissible cut $C$ of an oriented link diagram $L$, if there is a tangle $T_{2}$ such that the Khovanov homology of its closures is isomorphic to the Khovanov homology of the surgeries $L_{2}^{\mathcal{A}}$ respectively; i.e. $Kh(T_{2}^{\mathcal{A}})\cong Kh(L_{2}^{\mathcal{A}})$, then:
$$Kh(L)\cong Kh\left( D_{1}( T_{2})\right)$$
where $D$ is the corresponding planar arc operation\footnote{See section 5 in \cite{Tangles_BN} for a definition of planar algebras and tangle compositions.} of the tangle $T_{1}$ obtained from the cut $C$ \footnote{I like to see this corollary as the analog of Thevenin's and Norton's Theorem in electrical circuit theory.}.
\end{cor}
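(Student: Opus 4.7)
The plan is to apply the spectral sequence of the previous theorem to both $L$ and to the link $L' := D_{1}(T_{2})$ obtained by substituting $T_{2}$ into the $T_{1}$-environment, using the same admissible cut $C$. Since only the content of the region $R_{2}$ is replaced while $T_{1} \subset R_{1}$ is unchanged, admissibility of $C$ carries over to $L'$; one has $(L')_{1}^{\mathcal{B}} = L_{1}^{\mathcal{B}}$ for every $\mathcal{B} \in NC_{n}$, and therefore the bigraded spaces $\mathcal{C}(L_{1})_{\mathcal{A}}^{\bullet}$ together with their horizontal differentials $d_{1}^{\mathcal{A},\bullet}$ entering Definition \ref{Double_Khovanov_complex} are literally the same in $\mathcal{CC}(L;C)$ and $\mathcal{CC}(L';C)$. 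The second-side surgeries of $L'$ are by construction the closures $T_{2}^{\mathcal{A}}$. Plugging the hypothesis $Kh(T_{2}^{\mathcal{A}}) \cong Kh(L_{2}^{\mathcal{A}})$ into the $E^{2}$ formula \eqref{spectral_sequence} immediately yields an isomorphism between the $E^{2}$ pages of the two spectral sequences, one abutting to $Kh(L)$ and the other to $Kh(D_{1}(T_{2}))$.

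Because $E^{2}$-agreement does not by itself force agreement of abutments, I would then lift the comparison to the chain level via characteristic zero. Over $\mathbb{Q}$ every bounded chain complex is chain homotopy equivalent to its cohomology with zero differential, so the hypothesis supplies, for each $\mathcal{A} \in NC_{n}$, a chain homotopy equivalence
\[
\phi_{\mathcal{A}} : \mathcal{C}(L_{2}^{\mathcal{A}}) \xrightarrow{\ \simeq\ } \mathcal{C}(T_{2}^{\mathcal{A}})
\]
factoring through the common cohomology. Define $\Phi := \bigoplus_{\mathcal{A}} \mathrm{id} \otimes \phi_{\mathcal{A}}$ in the first presentation of $\mathcal{CC}^{s,t}$ from Definition \ref{Double_Khovanov_complex}. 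Commutation of $\Phi$ with the vertical differential $d_{vert} = \bigoplus_{\mathcal{A}} \mathrm{id} \otimes d_{2}^{\mathcal{A},t}$ is automatic, since each $\phi_{\mathcal{A}}$ is a chain map. Once $\Phi$ is verified to be a morphism of double complexes it will be a chain homotopy equivalence (tensoring with identities and direct summing preserve chain homotopy equivalences over a field), and Proposition \ref{Splitting_Khovanov_complex} will supply
\[
\mathcal{C}(L) \simeq Tot\left(\mathcal{CC}(L;C)\right) \simeq Tot\left(\mathcal{CC}(D_{1}(T_{2});C)\right) \simeq \mathcal{C}(D_{1}(T_{2})),
\]
whence the corollary upon passing to cohomology.

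The hard part is verifying the remaining compatibility $\Phi \circ d_{hor} = d_{hor}' \circ \Phi$: the horizontal differential is diagonal in the $\mathcal{A}$-index only in the \emph{second} decomposition $\bigoplus_{\mathcal{A}} \mathcal{C}(L_{1}^{\mathcal{A}})^{s} \otimes \mathcal{C}(L_{2})_{\mathcal{A}}^{t}$, whereas $\Phi$ is naturally defined using the first one, and translating between these decompositions (Lemma \ref{Decomposition1}) involves the combinatorics of how left-morphisms cross the cut. I would attack this by reopening the state-level analysis of Proposition \ref{Splitting_Khovanov_complex}: a left-morphism $\alpha = (\alpha_{1}, \alpha_{2})$ factorises locally as $d_{\alpha} = d_{\alpha_{1}}^{\mathcal{C}_{2}(\alpha_{2})} \otimes \mathrm{id}$ and thus acts only on the $L_{1}$-side of a smoothing, so the $\phi_{\mathcal{A}}$-factor commutes with it summand by summand; summing over left-morphisms with the signs prescribed in that proof should assemble into the global identity $\Phi \circ d_{hor} = d_{hor}' \circ \Phi$ and complete the argument.
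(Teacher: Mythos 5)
The paper treats this corollary as immediate from the spectral sequence theorem, i.e.\ the intended argument is exactly the $E^{2}$-comparison you open with; you are right to insist that agreement of $E^{2}$ pages does not force agreement of abutments, and your instinct to lift the comparison to the chain level is the correct one. The problem is that your repair fails at precisely the step you flag as ``the hard part,'' and the resolution you sketch there does not work. A left morphism that merges two \emph{outer} circles coarsens the partition, so in the first decomposition of \eqref{Double_Complex_spaces} it maps the summand $\mathcal{C}(L_{1})_{\mathcal{A}}^{s}\otimes\mathcal{C}(L_{2}^{\mathcal{A}})^{t}$ to a summand indexed by a strictly coarser $\mathcal{A}'$, and its component on the second tensor factor is \emph{not} the identity: the outer circles belong to $\mathcal{S}_{\alpha_{2}}(L_{2}^{\mathcal{A}})$ in this decomposition, and the component is the map $\mathcal{C}(L_{2}^{\mathcal{A}})\rightarrow\mathcal{C}(L_{2}^{\mathcal{A}'})$ induced by the elementary saddle cobordism taking the closure arcs $\vec{\mathcal{A}}$ to $\vec{\mathcal{A}'}$. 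Your factorisation $d_{\alpha}=d_{\alpha_{1}}^{\mathcal{C}_{2}(\alpha_{2})}\otimes\mathrm{id}$ lives in the \emph{other} decomposition, where the identity factor consists only of the inner circles of $L_{2}$; concluding from it that left morphisms ``act only on the $L_{1}$-side'' conflates the two decompositions. Consequently $\Phi\circ d_{hor}=d_{hor}'\circ\Phi$ requires, for every elementary coarsening $\mathcal{A}\rightarrow\mathcal{A}'$, the (coherently homotopy-) commutative square
$$\phi_{\mathcal{A}'}\circ\sigma^{L_{2}}_{\mathcal{A}\mathcal{A}'}\ =\ \sigma^{T_{2}}_{\mathcal{A}\mathcal{A}'}\circ\phi_{\mathcal{A}},$$
where $\sigma_{\mathcal{A}\mathcal{A}'}$ denotes the closure-cobordism map. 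The equivalences $\phi_{\mathcal{A}}$ you obtain by abstract homological algebra over $\Q$ are chosen independently for each $\mathcal{A}$ and carry no such compatibility, and nothing in the hypothesis $Kh(T_{2}^{\mathcal{A}})\cong Kh(L_{2}^{\mathcal{A}})$ supplies it.

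The same objection already undercuts your first paragraph: the $E^{1}$ differential acts on the factors $Kh_{\Q}^{q}(L_{2}^{\mathcal{A}})$ through the induced maps $H(\sigma_{\mathcal{A}\mathcal{A}'})$ whenever a left morphism coarsens the partition, so even the asserted isomorphism of $E^{2}$ pages needs the given isomorphisms to intertwine these maps; formula \eqref{spectral_sequence} hides this dependence in the bracket notation. What your strategy does prove is a corrected statement: if the isomorphisms $Kh(T_{2}^{\mathcal{A}})\cong Kh(L_{2}^{\mathcal{A}})$ are realised by homotopy equivalences commuting (up to coherent homotopy) with the cobordism maps between closures --- for instance if they come from a single map of tangles --- then $\Phi$ is a homotopy equivalence of double complexes and the conclusion follows from Proposition \ref{Splitting_Khovanov_complex}. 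Under the hypothesis as literally stated, the step ``$\Phi$ is a morphism of double complexes'' is a genuine gap, not a routine verification.
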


We define a \textit{Half Solomon} and \textit{Hopf connected sum} link diagram as the pair given by the link diagram and the admissible cut shown in the left and right hand side of Fig. \ref{Half_Solomon} respectively.

\begin{figure}
\begin{center}
  \includegraphics[width=0.6\textwidth]{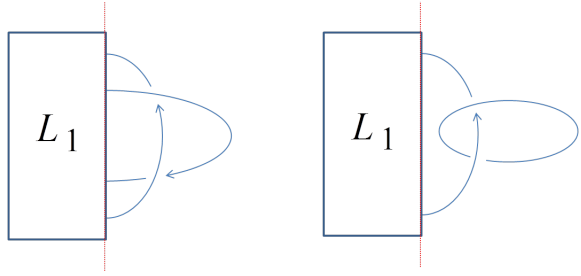}\\
  \end{center}
  \caption{Left: Half Solomon link diagram. Right: Hopf connected sum link diagram. The dashed line represent the admissible cut.}\label{Half_Solomon}
\end{figure}

\begin{cor}
Consider one of the link diagrams in Fig. \ref{Half_Solomon} or its mirror image. Then, the spectral sequence of its double Khovanov complex collapse.
\end{cor}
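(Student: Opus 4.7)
The plan is to show that for both diagrams in Fig.~\ref{Half_Solomon}, the $E^{2}$ page of the spectral sequence is concentrated on the single row $q=0$, which then forces all higher differentials to vanish for pure bidegree reasons. First, I would read off from the figures the parameter $n$ of the admissible cut in each case: the Hopf connected sum diagram gives $n=1$ (so $|NC_{1}|=1$) and the Half Solomon diagram gives $n=2$ (so $NC_{2}=\{trivial,full\}$). The crucial geometric feature of the two diagrams, visible in the pictures, is that the cut is placed so that every crossing of $L$ lies on one side, leaving the opposite tangle $T_{2}$ crossingless.

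With this setup, for every $\mathcal{A}\in NC_{n}$ the surgery $L_{2}^{\mathcal{A}}$ is a crossingless planar diagram, hence a disjoint union of unknots. Its rational Khovanov complex reduces to a single term in homological degree zero with zero differential, so $Kh_{\Q}^{q}(L_{2}^{\mathcal{A}})=0$ for every $q\neq 0$ and every $\mathcal{A}$. Substituting this vanishing into the $E^{2}$ formula \eqref{spectral_sequence} of the preceding theorem yields
$$E^{2}_{p,q}=0\qquad\text{whenever }q\neq 0,$$
so the $E^{2}$ page sits entirely on the line $q=0$.

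For $r\geq 2$ the spectral sequence differential $d^{r}:E^{r}_{p,q}\to E^{r}_{p+r,q-r+1}$ has target living in the row $q-r+1\leq -1<0$, which is zero on the $E^{2}$ page and therefore on every subsequent page. Hence $d^{r}=0$ for all $r\geq 2$ and the spectral sequence collapses at $E^{2}$. The mirror-image case is handled identically: taking the mirror of a diagram permutes the signs of crossings but does not change whether the tangle $T_{2}$ is crossingless, so the same bidegree vanishing applies verbatim.

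The only non-formal step, and thus the main obstacle, is the topological verification that the admissible cut in the given figures does leave $T_{2}$ crossingless. For the Hopf connected sum ($n=1$) this is immediate from the picture, while for the Half Solomon ($n=2$) the two non-crossing partitions $trivial$ and $full$ both need to be drawn out; in either case the verification reduces to a direct inspection of Fig.~\ref{Half_Solomon}, which is precisely what the definition of these two diagrams builds in.
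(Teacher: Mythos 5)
Your argument rests entirely on the claim that in both diagrams of Fig.~\ref{Half_Solomon} the tangle $T_{2}$ is crossingless, so that every surgery $L_{2}^{\mathcal{A}}$ is a disjoint union of unknots and the $E^{2}$ page is concentrated in the single row $q=0$. That premise is false for these diagrams. For the Hopf connected sum the relevant surgery $L_{2}$ is a Hopf link with two positive crossings, whose rational Khovanov homology is $V\{1\}\oplus V\{5\}[2]$; for the Half Solomon the two surgeries are $L_{2}^{full}\simeq \textsl{unknot}$ and $L_{2}^{trivial}\simeq \textsl{Hopf link}$ (compare the Solomon-link example, where exactly these surgeries appear). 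Hence the $E^{2}$ page is concentrated in the two rows $q=0$ and $q=2$, not in one row, and your vanishing statement $E^{2}_{p,q}=0$ for $q\neq 0$ does not hold. Note also that if $T_{2}$ really were crossingless the double complex would occupy a single column $t=0$ and the corollary would be vacuous; the statement is only of interest because $T_{2}$ carries crossings whose homology happens to be favorably placed.

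The collapse is still true, but for a different reason, which is the one the argument must supply: since the nonzero rows are $q=0$ and $q=2$, the differential $d^{2}\colon E^{2}_{p,q}\to E^{2}_{p+2,q-1}$ sends row $2$ to row $1$ and row $0$ to row $-1$, both of which vanish, so $d^{2}=0$; the higher differentials $d^{r}$, $r\geq 3$, either land in a vanishing row or shift the $p$-degree beyond the (two-crossing) width of $T_{1}$, so they vanish as well. Your final formal step (a single nonzero row forces collapse) is fine in itself, and the mirror-image reduction is unobjectionable, but the geometric input feeding it must be replaced by the two-row concentration coming from $Kh(\textsl{unknot})\cong V$ and $Kh(\textsl{Hopf link})\cong V\{1\}\oplus V\{5\}[2]$.
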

\begin{proof}
First, consider the Hopf connected sum link diagram. Without loss of generality, suppose its $full$-surgery $L_{2}$ is the Hopf link with positive and negative crossing numbers $(l_{+}^{2}, l_{-}^{2})=(2,0)$. The respective Khovanov homology is:
$$Kh(\textsl{Hopf\ link})\cong V\{1\}\oplus V\{5\}[2]$$
By formula \eqref{spectral_sequence}, the second page has non zero entries in the zeroth and second rows only and because of the fact that the differential at the second page has $(2,1)$-grade, the spectral sequence collapse.

Now, consider the Half Solomon link diagram. The corresponding surgeries are $L_{2}^{full}\simeq unknot$ and $L_{2}^{full}\simeq \textsl{Hopf\ link}$ such that the resulting Hopf link diagram is the one just considered. Because $Kh(\textsl{unknot})\cong V$, the second page has non zero entries in the zeroth and second rows only as before and the spectral sequence collapse.

The mirror image case is completely similar and we have the result.
\end{proof}

\subsection{Example}

Consider the Solomon link diagram $L$ such that $(l_{+},l_{-})=(4,0)$ and an admissible cut $C$. The $full$-surgeries are equal and the resulting knot is ambient isotopic to the unknot. The $trivial$-surgeries are also equal and coincide with the Hopf link diagram such that $(l_{+}, l_{-})=(2,0)$:
\begin{eqnarray*}
L_{1}^{full} &\simeq& L_{2}^{full}\ \ \ \simeq \textsl{unknot} \\
L_{1}^{trivial} &\simeq& L_{2}^{trivial}= \textsl{Hopf\ link}\ \ \ \ \ \ \ \ (l_{+}, l_{-})=(2,0)
\end{eqnarray*}

The Khovanov complex $\mathcal{C}(L_{1}^{full})$ is the following:

$$\xymatrix @R=1pc {	&	&	V^{\otimes 2}\{3\} \ar[rrd]^{m}	&	& 	\\
V^{\otimes 3}\{2\}	\ar[rru]^{id_{2}\otimes m_{13}} \ar[rrd]_{m_{12}\otimes id_{3}} &	&	\bigoplus			&	& V\{4\} \\
					&	&	V^{\otimes 2}\{3\} \ar[rru]_{-m}	&	& \\  
\save 	(18, -10)   *=(55,30){}*\frm{.},
		(63, -10)   *=(20,30){}*\frm{.},
		(18, -30)   *=(55,10){\mathcal{C}_{1}(\alpha)=full}*\frm{},		
		(63, -30)   *=(20,10){\mathcal{C}_{1}(\beta)=trivial}*\frm{},		
\restore }$$
The right box encloses those terms whose associated states $\alpha$ verify $\mathcal{C}_{1}(\alpha)=full$ while the second box encloses the term whose associated $\beta$ state verify $\mathcal{C}_{1}(\beta)=trivial$. By definition \ref{Bigraded_vector_space}, we have the bigraded vector spaces $\mathcal{C}(L_{1})_{full}$ and $\mathcal{C}(L_{1})_{trivial}$:

$$\xymatrix @R=1pc {V\{2\}	& \bigoplus	&	2\ k\{3\}[1]	& \bigoplus	& k\{4\}[2]	\\
\save 	(18, 0)   *=(55,10){}*\frm{.},
		(73, 0)   *=(20,10){}*\frm{.},
		(18, -10)   *=(55,10){\mathcal{C}(L_{1})_{full}}*\frm{},		
		(73, -10)   *=(20,10){\mathcal{C}(L_{1})_{trivial}}*\frm{},		
\restore }$$

The Khovanov complex $\mathcal{C}(L_{1}^{trivial})$ is the following:

$$\xymatrix @R=1pc {	&	&	V\{3\} \ar[rrd]^{\Delta}	&	& 	\\
V^{\otimes 2}\{2\}	\ar[rru]^{m} \ar[rrd]_{m} &	&	\bigoplus			&	& V^{\otimes 2}\{4\} \\
					&	&	V\{3\} \ar[rru]_{-\Delta}	&	& }$$

By Proposition, the double Khovanov complex $\mathcal{CC}(L;C)$ is the following:

$$\xymatrix{
V^{\otimes 2}\{6\} \ar[rrr]^{d_{0}^{trivial}\otimes k\{4\}} & & & 2.V\{7\} \ar[rrr]^{d_{1}^{trivial}\otimes k\{4\}} & & & V^{\otimes 2}\{8\} \\
& & & & & & \\
2.V^{\otimes 3}\{5\} \ar[rrr]^{d_{0}^{full}\otimes 2.k\{3\}} \ar[uu]_{V\{2\}\otimes d_{1}^{full}} & & & 4.V^{\otimes 2}\{6\} \ar[rrr]^{d_{1}^{full}\otimes 2.k\{3\}} \ar[uu]_{2.k\{3\}\otimes d_{1}^{full}} & & & 2.V\{7\} \ar[uu]_{k\{4\}\otimes d_{1}^{trivial}} \\
& & & & & & \\
V^{\otimes 4}\{4\} \ar[rrr]^{d_{0}^{full}\otimes V\{2\}} \ar[uu]_{V\{2\}\otimes d_{0}^{full}} & & & 2.V^{\otimes 3}\{5\} \ar[rrr]^{d_{1}^{full}\otimes V\{2\}} \ar[uu]_{2.k\{3\}\otimes d_{0}^{full}} & & & V^{\otimes 2}\{6\} \ar[uu]_{k\{4\}\otimes d_{0}^{trivial}} }$$
This double complex has the property that:
$$\mathcal{C}(L)\simeq Tot\left(\mathcal{CC}(L;C)\right)$$
hence to calculate the Khovanov homology of $L$ we need to calculate the spectral sequence of the double complex $\mathcal{CC}(L;C)$. The Khovanov homology of the surgeries reads as follows:
$$H(\mathcal{C}(L_{1}^{full})= Kh(L_{1}^{full})\cong Kh(\textsl{unknot})\cong V$$
$$H(\mathcal{C}(L_{1}^{trivial})= Kh(\textsl{Hopf\ link})\cong V\{1\}\oplus V\{5\}[2]$$

By direct calculation we have the following chain complex isomorphism:

$$\xymatrix{
\zeta: & V\{2\}\otimes Kh^{0}\left(L_{2}^{full}\right) \ar[rr]^{[d_{0}^{full}\otimes V\{2\}]} \ar[d]^{\cong}   & & 2.Kh^{0}\left(L_{2}^{full}\right)\{3\} \ar[rr]^{[d_{1}^{full}\otimes V\{2\}]} \ar[d]^{\cong}  & & Kh^{0}\left(L_{2}^{trivial}\right)\{4\} \ar[d]^{\cong} \\
\zeta': & V^{\otimes 2}\{2\} \ar[rr]^{\left(
\begin{array}{c}
m\\
m\\
\end{array}
\right)}   & & 2.V\{3\} \ar[rr]^{(m\circ\Delta, -m\circ\Delta)}   & & V\{5\}  }$$
hence their cohomology is the following:
\begin{equation}\label{cohomology_calculation}
H(\zeta)\cong H(\zeta')\cong V\{1\}\oplus k\{2\}[1] \oplus k\{6\}[2]
\end{equation}

The first page $E^{1}$ of the spectral sequence is the following:

$$\xymatrix{
0 \ar[rr] & &  0 \ar[rr]  & & V\{9\} \\
& & & & \\
0 \ar[rr]   & & 0 \ar[rr]   & & 0  \\
& & & &  \\
V^{\otimes 2}\{2\} \ar[rr]^{\left(
\begin{array}{c}
m\\
m\\
\end{array}
\right)} \ar[]-<3.5em,1em>;[uuuu]-<3.5em,-1em> \ar[]-<3.5em,1em>;[rrrr]-<-3.5em,1em> &  & 2.V\{3\} \ar[rr]^{(m\circ\Delta, -m\circ\Delta)}  &  & V\{5\} \\
\save 	(-20, -22)   *=(10,10){E^{1}=}*\frm{}
\restore }$$

\noindent By the isomorphism \ref{cohomology_calculation}, the second page $E^{2}$ of the spectral sequence reads:

$$\xymatrix{
0  & &  0  &  & V\{9\} \\
& & & & \\
0  & & 0  & & 0  \\
& & & &  \\
V\{1\} \ar[]-<3.5em,1em>;[uuuu]-<3.5em,-1em> \ar[]-<3.5em,1em>;[rrrr]-<-3.5em,1em> & & k\{2\}  & & k\{6\} \\
\save 	(-20, -22)   *=(10,10){E^{2}=}*\frm{}
\restore }$$

\noindent The spectral sequence collapse at the second page and we finally have the Khovanov homology of $L$:
$$Kh(L)\cong V\{1\}\oplus k\{2\}[1] \oplus k\{6\}[2] \oplus V\{9\}[4]$$

\appendix
\section{Jones polynomial splitting formula}

Recall the relation between the Jones polynomial in the Khovanov parameter $q$ and the Khovanov polynomial:
$$J(L)(q)= (q+q^{-1})^{-1}Kh(L)(-1,q)= (q+q^{-1})^{-1}\chi_{q}(\mathcal{C}(L))$$
where $\chi_{q}$ is the $q$-graded Euler characteristic. The Khovanov parameter $q$ is related to the usual Jones parameter $t$ by the relation: $q= -t^{1/2}$.

As an immediate corollary of equation \eqref{First_Groth_eq}, we have:

\begin{cor}\label{Jones_Splitting}
Given an admissible cut $C$ of an oriented link diagram $L$, there is a splitting formula for the Jones polynomial respect to the Khovanov parameter in the field of rational functions $k(q)$:
\begin{equation}\label{Jones_Splitting_Formula}
J(L)(q) = \sum_{\mathcal{A},\mathcal{B}\in NC_{n}} J\left( L_{1}^{\mathcal{A}}\right)(q)\ b_{\mathcal{A}\mathcal{B}}(q)\ J\left( L_{2}^{\mathcal{B}}\right)(q)
\end{equation}
where the splitting matrix $b(q)$ is the inverse of the matrix:
$$c^{\mathcal{A}\mathcal{B}}= (q+q^{-1})^{n+|\mathcal{A}\vee\mathcal{B}|-|\mathcal{A}\wedge\mathcal{B}|-1}$$
\end{cor}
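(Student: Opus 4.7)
The plan is to apply the $q$-graded Euler characteristic $\chi_q$, viewed as a ring homomorphism from the Grothendieck ring of bigraded vector spaces into $\Z[q,q^{-1}]$, to the identity (\ref{First_Groth_eq}), and then solve algebraically for $J(L)(q)$ in the field $k(q)$.

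First I would record the basic identities $\chi_q(V)=q+q^{-1}$ and $\chi_q(\mathcal{C}(L))=(q+q^{-1})J(L)(q)$ (and analogously for each surgery $L_i^{\mathcal{A}}$), as stated in section \ref{Khovanov_Nutshell}. In particular, the entries of the matrix $A=(a^{\mathcal{A}\mathcal{B}})$ from (\ref{First_Groth_eq}) evaluate to
\begin{equation*}
\chi_q(a^{\mathcal{A}\mathcal{B}})=(q+q^{-1})^{n+|\mathcal{A}\vee\mathcal{B}|-|\mathcal{A}\wedge\mathcal{B}|}=(q+q^{-1})\cdot c^{\mathcal{A}\mathcal{B}},
\end{equation*}
so, denoting by $\tilde{c}$ the matrix with entries $\chi_q(a^{\mathcal{A}\mathcal{B}})$, we have $\tilde{c}=(q+q^{-1})\,c$; and because $\chi_q$ is multiplicative, $\chi_q(A_{\mathcal{A}\mathcal{B}})$ is exactly the $\mathcal{A}\mathcal{B}$-cofactor $\tilde{C}_{\mathcal{A}\mathcal{B}}$ of $\tilde{c}$.

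Applying $\chi_q$ to (\ref{First_Groth_eq}) then yields
\begin{equation*}
(q+q^{-1})\,J(L)(q)\cdot \det(\tilde{c})=(q+q^{-1})^{2}\sum_{\mathcal{A},\mathcal{B}\in NC_{n}} J(L_{1}^{\mathcal{A}})(q)\cdot \tilde{C}_{\mathcal{A}\mathcal{B}}\cdot J(L_{2}^{\mathcal{B}})(q).
\end{equation*}
Dividing both sides by $(q+q^{-1})\det(\tilde{c})$ and invoking Cramer's rule $\tilde{C}_{\mathcal{A}\mathcal{B}}/\det(\tilde{c})=(\tilde{c}^{-1})_{\mathcal{A}\mathcal{B}}$ (the matrix is symmetric, so the cofactor/adjugate transposition is immaterial), I would then substitute $\tilde{c}^{-1}=(q+q^{-1})^{-1}c^{-1}=(q+q^{-1})^{-1}b$ to absorb the remaining factor of $(q+q^{-1})$ and recover exactly the splitting formula (\ref{Jones_Splitting_Formula}).

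The only real obstacle is the invertibility of $c$ (equivalently $\tilde{c}$) over the field $k(q)$, which is needed to legitimize the Cramer's-rule step. This is however already established in the remark preceding the corollary: since $|\mathcal{A}\wedge\mathcal{B}|\ge\max(|\mathcal{A}|,|\mathcal{B}|)\ge|\mathcal{A}\vee\mathcal{B}|$ with equality only when $\mathcal{A}=\mathcal{B}$, the diagonal entries of $\tilde{c}$ carry the strictly highest power of $(q+q^{-1})$ (namely $n$), so $\det(\tilde{c})$ is a nonzero Laurent polynomial and hence a unit in $k(q)$. Everything else in the argument is purely formal, so no further work beyond this bookkeeping is required.
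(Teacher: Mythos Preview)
Your proposal is correct and is exactly the argument the paper intends: the corollary is stated in the paper as an ``immediate corollary of equation \eqref{First_Groth_eq}'', and what you wrote is precisely the bookkeeping that makes that implication explicit---applying the ring homomorphism $\chi_q$ to \eqref{First_Groth_eq}, using $\chi_q(\mathcal{C}(L))=(q+q^{-1})J(L)$, and inverting the resulting matrix via the nonvanishing of $\det(c)$ already noted before the statement.
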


Recall that the matrix $(c^{\mathcal{A}\mathcal{B}}(q))$ is invertible for its diagonal is the higher degree term and cannot be canceled by any other term. In particular, the analog of corollary \ref{Thevenin} holds for the Jones polynomial:

\begin{cor}
Given an admissible cut $C$ of an oriented link diagram $L$, if there is a tangle $T_{2}$ such that the Jones polynomial of its closures equals the Jones polynomial of the surgeries $L_{2}^{\mathcal{A}}$ respectively; i.e. $J(T_{2}^{\mathcal{A}})= J(L_{2}^{\mathcal{A}})$, then:
$$J(L)= J\left( D_{1}( T_{2})\right)$$
where $D$ is the corresponding planar arc operation of the tangle $T_{1}$ obtained from the cut $C$.
\end{cor}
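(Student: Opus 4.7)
The plan is to apply the Jones polynomial splitting formula of Corollary \ref{Jones_Splitting} to both $L$ and $D_{1}(T_{2})$ with respect to the same admissible cut $C$, and then match the two expansions term by term using the hypothesis $J(T_{2}^{\mathcal{A}}) = J(L_{2}^{\mathcal{A}})$.

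First, I would note that by construction of the planar arc operation $D_{1}$ associated to the tangle $T_{1}$, the diagram $D_{1}(T_{2})$ agrees with $L$ inside the region $R_{1}$ (both contain the tangle $T_{1}$) and has the tangle $T_{2}$ in the region $R_{2}$. In particular, the Jordan curve $C$ is again an admissible cut for $D_{1}(T_{2})$, and the intersection points with the link are the same $2n$ points. Thus the splitting formula of Corollary \ref{Jones_Splitting} applies to $D_{1}(T_{2})$ with the same value of $n$ and the same non-crossing partition indexing set $NC_{n}$.

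Second, I would observe that the surgery $D_{1}(T_{2})_{1}^{\mathcal{A}}$ obtained by cutting $D_{1}(T_{2})$ and closing the $T_{1}$-side by the arcs indexed by $\mathcal{A}$ is, by definition \eqref{surgeriesDef}, exactly $L_{1}^{\mathcal{A}}$, since this surgery only sees the tangle $T_{1}$. Likewise, the surgery $D_{1}(T_{2})_{2}^{\mathcal{B}}$ on the $T_{2}$-side is by definition $T_{2}^{\mathcal{B}}$. Moreover, the splitting matrix $b(q)$ is the inverse of $c^{\mathcal{A}\mathcal{B}}(q) = (q+q^{-1})^{n+|\mathcal{A}\vee\mathcal{B}|-|\mathcal{A}\wedge\mathcal{B}|-1}$, which depends only on $n$ and on the combinatorics of $NC_{n}$, hence is identical for the two cuts.

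Third, applying Corollary \ref{Jones_Splitting} to $D_{1}(T_{2})$ and then substituting the hypothesis $J(T_{2}^{\mathcal{B}}) = J(L_{2}^{\mathcal{B}})$ gives
\begin{equation*}
J\bigl(D_{1}(T_{2})\bigr)(q) = \sum_{\mathcal{A},\mathcal{B}\in NC_{n}} J\bigl(L_{1}^{\mathcal{A}}\bigr)(q)\ b_{\mathcal{A}\mathcal{B}}(q)\ J\bigl(T_{2}^{\mathcal{B}}\bigr)(q) = \sum_{\mathcal{A},\mathcal{B}\in NC_{n}} J\bigl(L_{1}^{\mathcal{A}}\bigr)(q)\ b_{\mathcal{A}\mathcal{B}}(q)\ J\bigl(L_{2}^{\mathcal{B}}\bigr)(q),
\end{equation*}
which, by another application of the splitting formula to $L$, equals $J(L)(q)$, yielding the claim. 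I expect no real obstacle: the only point requiring a bit of care is verifying that the splitting matrix $b(q)$ and the indexing by $NC_{n}$ truly are intrinsic to the cut $C$ (independent of what is placed on either side), which is immediate from the fact that the matrix $c^{\mathcal{A}\mathcal{B}}$ depends only on the abstract non-crossing partitions and their joins and meets.
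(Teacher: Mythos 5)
Your proposal is correct and takes essentially the same route as the paper, which states this corollary as an immediate consequence of the Jones polynomial splitting formula (Corollary \ref{Jones_Splitting}) and the invertibility of the matrix $c^{\mathcal{A}\mathcal{B}}(q)$. Applying the splitting formula to both $L$ and $D_{1}(T_{2})$ with the same cut, observing that the $T_{1}$-side surgeries and the splitting matrix coincide, and substituting the hypothesis $J(T_{2}^{\mathcal{B}})=J(L_{2}^{\mathcal{B}})$ is exactly the intended argument.
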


\begin{figure}
\begin{center}
  \includegraphics[width=0.5\textwidth]{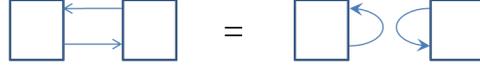}\\
  \end{center}
  \caption{Schematic picture of the Jones Polynomial factorization formula \eqref{factorization}.}\label{Jones_Splitting_I}
\end{figure}

In the case $n$ equals one, the formula \eqref{Jones_Splitting_Formula} reproduce the well known factorization of a connected sum:
\begin{equation}\label{factorization}
J(L_{1}\# L_{2})=J(L_{1})J(L_{2})
\end{equation}
Formula \eqref{factorization} is illustrated in Fig. \ref{Jones_Splitting_I}.

\begin{figure}
\begin{center}
  \includegraphics[width=1.0\textwidth]{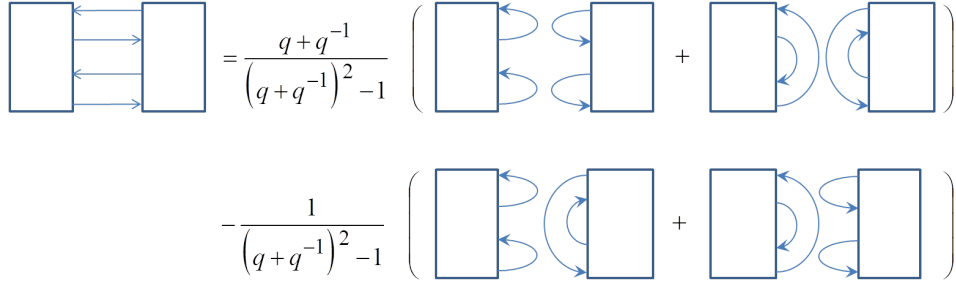}\\
  \end{center}
  \caption{Schematic picture of the Jones Polynomial splitting formula \eqref{SplittingII}.}\label{Jones_Splitting_II}
\end{figure}

In the case $n$ equals two, the splitting formula \eqref{Jones_Splitting_Formula} reads as follows:
\begin{eqnarray}\label{SplittingII}
J(L) &=& \frac{q+q^{-1}}{\left(q+q^{-1}\right)^{2}-1}\left(J(L_{1})J(L_{2})+J(\widehat{L_{1}})J(\widehat{L_{2}})\right) \\
&-& \frac{1}{\left(q+q^{-1}\right)^{2}-1}\left(J(L_{1})J(\widehat{L_{2}})+J(\widehat{L_{1}})J(L_{2})\right) \nonumber
\end{eqnarray}
where $\widehat{L_{i}}$ denotes the trivial surgery. This formula is illustrated in Fig. \ref{Jones_Splitting_II}.

\begin{figure}
\begin{flushleft}
  \includegraphics[width=1.1\textwidth]{Jones_Splitting_III.pdf}\\
  \end{flushleft}
  \caption{Schematic picture of the Jones Polynomial splitting formula.}\label{Jones_Splitting_III}
\end{figure}

In the case $n$ equals three, the splitting formula \eqref{Jones_Splitting_Formula} is illustrated in Fig. \ref{Jones_Splitting_III}.

\section*{Acknowledgments}
The author is grateful to Consejo Nacional de Ciencia y Tecnolog\'ia (CONACYT), for its \textit{C\'atedras CONACYT} program.


\begin{thebibliography}{0}




\bibitem[1]{classicBN}
D.Bar-Natan, \emph{On Khovanov's categorification of the Jones polynomial}, Algebraic and Geometric Topology 2 (2002), 337-370. 


\bibitem[2]{Tangles_BN}
D.Bar-Natan, \emph{Khovanov's Homology for Tangles and Cobordisms}, Geometry and Topology 9-33 (2005) 1443-1499.


\bibitem[3]{Fast_Khovanov_BN}
D.Bar-Natan, \emph{Fast Khovanov Homology Computations}, Journal of Knot Theory and Its Ramifications, Vol 16, 3.


\bibitem[4]{Khovanov_original}
M.Khovanov, \emph{A categorification of the Jones polynomial}, Duke Math. J. 101 (2000), 359-426.

\bibitem[5]{Kreweras}
G.Kreweras, \emph{Sur les partitions non croisées d'un cycle}, Discrete Math., 1 (1972), 333-350.

\bibitem[6]{Lee}
E.S.Lee, \emph{An endomorphism of the Khovanov invariant}, Advances in Mathematics, Vol 197, 2, 554-586.

\bibitem[7]{open_closed_TQFT}
A.D.Lauda, H.Pfeiffer, \emph{Open-closed TQFTs extend Khovanov homology from links to tangles}, Journal of Knot Theory and Its Ramifications, Vol 18, 1.


\bibitem[8]{mccammond}
J.Mccammond, \emph{Noncrossing partitions in surprising locations}, Amer. Math. Monthly, 113 (2006), 598-610.


\end{thebibliography}
\end{document}